\theoremstyle{plain}
\newtheorem{thm}{Theorem}[section]
\newtheorem{lem}[thm]{Lemma}
\newtheorem{prop}[thm]{Proposition}
\theoremstyle{definition}
\newtheorem{defn}[thm]{Definition}
\newtheorem{ex}[thm]{Example}
\newtheorem{rem}[thm]{Remark}
\newtheorem{prob}[thm]{Problem}
\theoremstyle{rem}
\newcommand{\N}{\mathbb{N}}
\newcommand{\Z}{\mathbb{Z}}
\newcommand{\cD}{\mathcal{D}}
\newcommand{\cG}{\mathcal{G}}
\newcommand{\X}{\mathfrak{X}}
\newcommand{\e}{\epsilon}
\newcommand{\U}{\Upsilon}
\newcommand{\tree}{\mathcal{T}}
\DeclareMathOperator{\Aut}{\rm Aut}
\DeclareMathOperator{\Sym}{\rm Sym}
\DeclareMathOperator{\Alt}{\rm Alt}
\DeclareMathOperator{\id}{id}
\DeclareMathOperator{\Iso}{\rm Iso}
\DeclareMathOperator{\pr}{\rm pr}
\numberwithin{equation}{section}
\definecolor{darkgreen}{cmyk}{1,0,1,.2}
\definecolor{m}{rgb}{1,0.1,1}
\newdimen\theight
\def\TeXref#1{%
             \leavevmode\vadjust{\setbox0=\hbox{{\tt
                     \quad\quad  {\small \textrm #1}}}%
             \theight=\ht0
             \advance\theight by \lineskip
             \kern -\theight \vbox to
             \theight{\rightline{\rlap{\box0}}%
             \vss}%
             }}%
\begin{document}

\begin{abstract}
The discriminant group of a minimal equicontinuous action of a group $G$ on a Cantor set $X$ is the subgroup of the closure of the action in the group of homeomorphisms of $X$, consisting of homeomorphisms which fix a given point. The stabilizer and the centralizer groups associated to the action are obtained as direct limits of sequences of subgroups of the discriminant group with certain properties. 
Minimal equicontinuous group actions on Cantor sets admit a classification by the properties of the stabilizer and centralizer direct limit groups.

In this paper, we construct new families of examples of minimal equicontinuous actions on Cantor sets, which illustrate certain aspects of this classification. These examples are constructed as actions on rooted trees. The acting groups are countable subgroups of the product or of the wreath product of groups. We discuss applications of our results to the study of attractors of dynamical systems and of minimal sets of foliations.

\end{abstract}
	
\title{Wild Cantor actions}

\author{Jes\'us \'Alvarez L\'opez}
\address{Jes\'us \'Alvarez L\'opez, Department of Mathematics, 
         Faculty of Mathematics,
         University of Santiago de Compostela,
         15782 Santiago de Compostela, Spain}
\email{jesus.alvarez@usc.es}

\author{Ramon Barral Lijo}
\address{Ramon Barral Lijo, Research Organization of Science and Technology, Ritsumeikan University, 1 Chome-1-1 Nojihigashi, Kusatsu, Shiga 525-8577, Japan}
\email{ramonbarrallijo@gmail.com}

\author{Olga Lukina}
\address{Olga Lukina, Faculty of Mathematics, University of Vienna, Oskar-Morgenstern-Platz 1, 1090 Vienna, Austria}
\email{olga.lukina@univie.ac.at}

\author{Hiraku Nozawa}
\address{Hiraku Nozawa, Department of Mathematical Sciences, College of Science and Engineering, Ritsumeikan University, 1 Chome-1-1 Nojihigashi, Kusatsu, Shiga 525-8577, Japan}
\email{hnozawa@fc.ritsumei.ac.jp}

\thanks{Version date: November 24, 2020}

\thanks{JAL is partially supported by Project MTM2017-89686-P (AEI/FEDER, UE); RBL is partially supported by a Canon Foundation in Europe Research Fellowship; OL is supported by the FWF Project P31950-N35; HN is partially supported by JSPS KAKENHI Grant number 17K14195 and 	20K03620}

\thanks{MSC 2020: Primary 37B05, 37E25, 20E08, 20E15, 20E18, 20E22, 22F05, 22F50. Secondary: 20F22, 57R30, 57R50.}

\thanks{Keywords: group actions, Cantor sets, equicontinuous actions, group actions on rooted trees, wreath products, profinite groups, stabilizer direct limit group, centralizer direct limit group, the alternating group, the cyclic group}

\maketitle

\section{Introduction}

Let $X$ be a Cantor set, that is, a compact totally disconnected perfect metric space with metric $D$, and let $G$ be a finitely generated group. Let $\Phi: G \to \operatorname{Homeo}(X)$ be an action of $G$ on $X$. We also denote such an action by $(X,G,\Phi)$, and use the shortcut $g \cdot x:=\Phi(g)(x)$.
We assume that the action $(X,G,\Phi)$ is minimal, that is, for every $x \in X$ the orbit $G \cdot x = \{ g \cdot x \mid g \in G\}$ is dense in $X$, and that $(X,G,\Phi)$ is \emph{equicontinuous}, that is, for every $\e>0$ there is $\delta>0$ such that for every $g \in G$ and every $x,y \in X$ with $D(x,y)< \delta$ we have $D(g \cdot x, g \cdot y)< \e$.

Examples of equicontinuous actions on Cantor sets are abundant in mathematics. For instance, actions of self-similar groups on the boundaries of regular rooted trees are equicontinuous \cite{BN2008,Grig2011,Nekrashevych2005}. Also, arboreal representations of absolute Galois groups of number fields give rise to such actions \cite{Lukina2019}, see \cite{Jones2013} for a further discussion of arboreal representations. In continuum dynamics, equicontinuous actions on Cantor sets arise as actions of the fundamental group of a closed manifold on the fibre of a weak solenoid over this manifold \cite{McCord1965,FO2002}. A weak solenoid is the inverse limit of an infinite sequence of finite-to-one covering spaces of a closed manifold $M$. A weak solenoid is a fibre bundle over $M$ with a Cantor set fibre, so it is a flat bundle \cite{CC2000} and an example of a foliated space with totally disconnected transversals.

In foliation theory, foliated spaces which are transversely Cantor sets, also called laminations, arise as minimal sets of smooth foliated manifolds. Very little is known about minimal sets of foliations of codimension and leaf dimension $2$ and higher. It was shown in \cite{ClarkHurder2013} that a lamination with equicontinuous transverse dynamics is homeomorphic to a weak solenoid, and so the holonomy pseudogroup of such a lamination is induced by a group action. Embeddings of certain weak solenoids over tori as minimal sets of foliations were constructed in \cite{ClarkHurder2011}, and some sufficient conditions for a lamination to admit an embedding as a minimal set were studied in \cite{Hurder2017}. Even when a lamination has  equicontinuous dynamics, it is in general not known when it admits an embedding as a minimal set of a foliation, except for the toral case discussed above. One of the reasons for that may be that Cantor sets, being totally disconnected, are extremely flexible, and the actions of non-abelian groups on Cantor sets exhibit a wide variety of phenomena, which are still not fully understood. To answer the embedding question, one must better understand the behavior of the actions considered, and this is one of the motivations of our study of the classification of equicontinuous group actions on Cantor sets. Some, but most likely not all of the phenomena exhibited by minimal equicontinuous actions on Cantor sets can also be observed for equicontinuous minimal actions on connected topological spaces \cite{ALC2009,ALM2016,AB2019}.

Algebraic invariants which classify equicontinuous minimal Cantor actions were introduced in the works of the third author joint with Dyer and Hurder \cite{DHL2017,HL2019a,HL2019b}. We describe this classification in more detail below. A natural question is then whether all possibilities given by the classification are realised in actual examples. Papers \cite{DHL2017,HL2019a,HL2019b} provide constructions of several classes of actions illustrating some of the aspects of this classification, but the examples constructed do not capture the more subtle aspects of the classification scheme. In this paper, we give the constructions of new classes of equicontinuous minimal Cantor actions, using techniques from theory of group actions on rooted trees, to show that all possibilities occurring in the classification in \cite{DHL2017,HL2019a,HL2019b} are realised.

While our methods for constructing actions are standard, it is the careful choices made in the constructions that are critical for the resulting dynamical properties of the actions. These choices sometimes reveal unexpected connections, for instance, with number-theoretical problems. For example in Remark~\ref{rem-zhang} we observe that the solution of the bounded gaps conjecture by Zhang can be applied to obtain many new classes of actions on rooted trees with exotic dynamical properties. In Example \ref{ex-arith} we observe that action of wreath products of cyclic groups, which are among the family of actions studied in Theorem \ref{thm-main2}, arise in number theory and arithmetic dynamics as arboreal representations of absolute Galois groups of number fields into the automorphism groups of rooted trees.

\medskip
We now give a brief sketch of the classification by algebraic invariants in order to state our main results.

Let $(X,G,\Phi)$ be a minimal equicontinuous action of a countably generated group $G$ on a Cantor set $X$. Throughout the paper we assume that the action of $G$ on $X$ is effective, that is, $g \cdot x = x$ for all $x \in X$ implies $g = \id$.
Then the closure $E(G) := \overline{\Phi(G)} \subset \operatorname{Homeo}(X)$ in the uniform topology is a profinite group identified with the Ellis group of the action $(X,G,\Phi)$ \cite{Auslander1988,Ellis1960,EllisGottschalk1960}. Elements of $E(G)$ can be thought of as sequences $\widehat{g} = (g_i)$, $g_i \in G$ for $i \geq 0$, and the image  $\Phi(G)$ is a dense subgroup of $E(G)$. If the action $(X,G,\Phi)$ is effective, then $\Phi(G)$ is identified with $G$. In any case, the action $\Phi$    induces an action $\widehat{\Phi}:E(G) \to \operatorname{Homeo}(X)$, which is transitive since $(X,G,\Phi)$ is assumed to be minimal. 

Let $x$ be a point in $X$, and denote by 
  \begin{align*}E(G)_x = \{\widehat{g} \in E(G) \mid \widehat{g} \cdot x =x\}\end{align*}
the isotropy group of the action of $E(G)$ at $x \in X$. Then $X = E(G)/E(G)_x$ is a homogeneous space for the action of $E(G)$.

Since $E(G)$ is a profinite group, its identity element admits a fundamental system $\{\widehat{V}_\ell\}_{\ell \geq 0}$, $\widehat{V}_0 = E(G)$, of open neighborhoods, such that each $\widehat V_\ell$ is a normal subgroup of $E(G)$ of finite index \cite[Theorem 2.1.3]{RZ}. Then for each $\ell \geq 0$ the subgroup $\widehat V_\ell$ is a clopen normal subgroup of $E(G)$, and the product $\widehat{U}_\ell = E(G)_x \, \widehat{V}_\ell$ is a clopen subgroup of $E(G)$. Recall that  a clopen subset $V \subset X$ is a closed and open subset. The quotient space $U_\ell = \widehat{U}_\ell/E(G)_x$ is a clopen neighborhood of $x$ in $X$. The isotropy group $E(G)_x$ is a normal subgroup of $E(G)$ if and only if it is trivial \cite{DHL2016}, so if $E(G)_x$ is non-trivial, then $U_\ell$ is not a group. We define an increasing chain of subgroups of $E(G)_x$ by
  \begin{align}\label{eq-Kchain} K(\Phi) = \{K_\ell\}_{ \ell \geq 0}, &  &K_\ell =  \left\{ \widehat{h} \in E(G)_x \mid \widehat{h} \, \widehat{g} \, E(G)_x = \widehat{g} \, E(G)_x, \, \textrm{ for all }\widehat{g} \in \widehat{U}_\ell \right\},\end{align}
that is, $K_\ell$ consists of all elements in $E(G)_x$ which act as the identity on the clopen set $U_\ell$. Next, define the adjoint action of the isotropy group $E(G)_x$ on $E(G)$ by 
  $$Ad(\widehat{h})(\widehat{g}) = \widehat{h} \, \widehat{g} \, \widehat{h}^{-1} \textrm{, for } \widehat{h} \in E(G)_x \textrm{ and } \widehat{g} \in E(G).$$ 
  
We have $E(G)_x = \bigcap_{\ell \geq 0} \widehat{U}_\ell$ \cite{HL2019b}. Since $E(G)_x \subset \widehat U_\ell$, the adjoint action of $E(G)_x$ preserves the clopen subgroups $\widehat{U}_\ell$ for $\ell \geq 0$. Thus we can define an increasing chain of subgroups by 
 \begin{align}\label{eq-Zchain} Z(\Phi) = \{Z_\ell\}_{ \ell \geq 0}, & & Z_\ell = \left\{ \widehat{h} \in E(G)_x \mid Ad(\widehat{h})(\widehat{g}) = \widehat{g}, \, \textrm{ for all }\widehat{g} \in \widehat{U}_\ell \right\}.\end{align}
We have that $Z_\ell \subset K_\ell$ for $\ell \geq 0$. 

For $\ell \geq 0$, denote by $\iota_\ell^{\ell+1}: K_\ell \to K_{\ell+1}$ the inclusions maps, and note that the maps $\iota_\ell^{\ell+1}$ restrict to inclusions $\iota_\ell^{\ell+1}: Z_\ell 
\to Z_{\ell+1}$. Thus we obtain two directed systems of groups, $\cG(K_\ell, \iota_\ell^{\ell+1}, \N)$ and $\cG(Z_\ell, \iota_\ell^{\ell+1}, \N)$, over the totally ordered set of natural numbers. For $k>\ell$, denote by $\iota^k_\ell = \iota^{k}_{k-1} \circ \cdots \circ \iota^{\ell+1}_\ell$ the composition of the inclusions maps. Form the direct limit groups for these systems as usual, that is, 
  \begin{align}\label{eq-directlimk} \Upsilon^x_s(\Phi) =\lim_{\longrightarrow} 
\cG(K_\ell, \iota_\ell^{\ell+1}, \N) = \bigcup_{\ell \in \N} K_\ell \Big\slash \left( \widehat{h}_\ell \sim \widehat{h}_{\ell'} \right),
 \end{align}
where $\widehat{h}_\ell \sim \widehat{h}_{\ell'} $ if and only if there is $k > \ell, \ell'$ such that $\iota^k_\ell(\widehat{h}_\ell) = \iota^k_{\ell'}(\widehat{h}_{\ell'})$, and similarly
\begin{align}\label{eq-directlimz} \Upsilon^x_c(\Phi) =\lim_{\longrightarrow} 
\cG(Z_\ell, \iota_\ell^{\ell+1}, \N) = \bigcup_{\ell \in \N} Z_\ell \Big\slash \left( \widehat{h}_\ell \sim \widehat{h}_{\ell'} \right).
 \end{align}
The groups \eqref{eq-directlimk}  and \eqref{eq-directlimz} are called the \emph{stabilizer} and the \emph{centralizer (direct limit) groups} respectively \cite{HL2019b}. Note that we have the canonical inclusion
\begin{equation}\label{eq-cs}
    \upsilon_x : \Upsilon^x_c(\Phi) \longrightarrow \Upsilon^x_s(\Phi).
\end{equation}
It was proved in \cite{HL2019b} that although the groups $K_\ell$ appearing in \eqref{eq-directlimk}  and  $Z_\ell$ appearing in \eqref{eq-directlimz}  depend on the choice of a system of neighborhoods of the identity $\{\widehat{V}_\ell\}_{\ell \geq 0}$ and on the choice of a point $x \in X$, the isomorphism classes of the direct limits do not depend on these choices. More precisely, we have:

\begin{thm}[{\cite[Theorem 4.15]{HL2019b}}]
Let $(X,G,\Phi)$ be a minimal equicontinuous action of a finitely generated group $G$ on a Cantor set $X$. Then the direct limit isomorphism classes $\U_s(\Phi)$ and $\U_c(\Phi)$ of the groups $\U^x_s(\Phi)$ and $\U^x_c(\Phi)$ are invariants of the conjugacy class of the action $(X,G,\Phi)$.
\end{thm}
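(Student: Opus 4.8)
The plan is to show that a topological conjugacy between two minimal equicontinuous actions induces an isomorphism of the associated directed systems of subgroups, and then to invoke the independence of the direct limit isomorphism classes from the auxiliary choices.

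Let $(X,G,\Phi)$ and $(X',G,\Phi')$ be conjugate actions, so there is a homeomorphism $\tau \colon X \to X'$ with $\tau(g \cdot x) = g \cdot \tau(x)$ for all $g \in G$ and $x \in X$. First I would observe that conjugation by $\tau$ defines a map $\tau_* \colon \Homeo(X) \to \Homeo(X')$, $\tau_*(\phi) = \tau \circ \phi \circ \tau^{-1}$. Since $X$ and $X'$ are compact, $\tau$ and $\tau^{-1}$ are uniformly continuous, so $\tau_*$ is a homeomorphism for the uniform topologies and a group isomorphism. The equivariance of $\tau$ gives $\tau_*(\Phi(g)) = \Phi'(g)$, hence $\tau_*$ carries $\Phi(G)$ onto $\Phi'(G)$; taking closures, $\tau_*$ restricts to a topological isomorphism $E(G) \to E(G')$ of the Ellis groups. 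The first step where care is needed is exactly this continuity claim, since a priori $\tau$ only intertwines the $G$-actions and is not assumed to respect the profinite structures; but uniform continuity on a compact Cantor set makes $\tau_*$ bicontinuous automatically.

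Next I would track how $\tau_*$ transports all the data in the construction. Setting $x' = \tau(x)$, for $\widehat{g} \in E(G)$ we have $\tau_*(\widehat{g}) \cdot x' = \tau(\widehat{g} \cdot x)$, so $\tau_*$ maps the isotropy group $E(G)_x$ isomorphically onto $E(G')_{x'}$. Given a fundamental system $\{\widehat{V}_\ell\}_{\ell \geq 0}$ of clopen normal subgroups of $E(G)$, its image $\{\tau_*(\widehat{V}_\ell)\}_{\ell \geq 0}$ is such a system for $E(G')$, and $\tau_*(\widehat{U}_\ell) = E(G')_{x'}\, \tau_*(\widehat{V}_\ell) =: \widehat{U}'_\ell$, with $\tau(U_\ell) = U'_\ell$. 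Because $\tau_*$ conjugates the $E(G)$-action on $X$ to the $E(G')$-action on $X'$, an element $\widehat{h} \in E(G)_x$ acts as the identity on $U_\ell$ if and only if $\tau_*(\widehat{h})$ acts as the identity on $U'_\ell$; thus $\tau_*(K_\ell) = K'_\ell$. Similarly, since $\tau_*$ is a group isomorphism, $\tau_*\big(Ad(\widehat{h})(\widehat{g})\big) = Ad(\tau_*(\widehat{h}))(\tau_*(\widehat{g}))$, whence $\tau_*(Z_\ell) = Z'_\ell$.

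Then I would assemble these into an isomorphism of directed systems. Since $\tau_*$ is a single group isomorphism whose restrictions to the $K_\ell$ are the maps above, it commutes with the inclusions $\iota_\ell^{\ell+1}$, giving an isomorphism $\cG(K_\ell, \iota_\ell^{\ell+1}, \N) \to \cG(K'_\ell, \iota_\ell^{\ell+1}, \N)$ of directed systems, and likewise for the $Z_\ell$. Passing to direct limits yields isomorphisms $\U^x_s(\Phi) \cong \U^{x'}_s(\Phi')$ and $\U^x_c(\Phi) \cong \U^{x'}_c(\Phi')$, computed with the transported neighborhood system and the point $x'$. Finally, invoking the cited result of \cite{HL2019b} that the direct limit isomorphism classes are independent of the choice of point and of the choice of fundamental system, these isomorphisms descend to equalities $\U_s(\Phi) = \U_s(\Phi')$ and $\U_c(\Phi) = \U_c(\Phi')$ of isomorphism classes, proving the invariance under conjugacy. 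The main obstacle is the bookkeeping of the previous paragraph---verifying that $\tau_*$ intertwines every layer of the construction; the genuinely subtle content, namely the independence of the limits from the auxiliary choices, is supplied by \cite{HL2019b} and reduces here to aligning the data via $\tau_*$.
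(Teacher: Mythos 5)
Your proposal is correct, but note that this paper does not prove the statement at all: it is imported verbatim from \cite[Theorem 4.15]{HL2019b}, with the surrounding text only recording that the independence of $\U^x_s(\Phi)$ and $\U^x_c(\Phi)$ from the choice of $x$ and of the fundamental system $\{\widehat{V}_\ell\}$ was established there. Your argument---transporting $E(G)$, $E(G)_x$, $\widehat{U}_\ell$, $K_\ell$ and $Z_\ell$ through the conjugating homeomorphism via $\tau_*$ and then quoting the independence-of-choices result to identify isomorphism classes---is sound, is not circular (conjugacy invariance is deduced from, not used in, the well-definedness of $\U_s(\Phi)$ and $\U_c(\Phi)$), and is the natural reconstruction of the proof the paper delegates to \cite{HL2019b}.
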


Now we can introduce a classification of minimal equicontinuous group Cantor actions as in \cite{HL2019b}. We say that the directed system of groups $\cG(K_\ell, \iota_\ell^{\ell+1}, \N)$ (resp. $\cG(Z_\ell, \iota_\ell^{\ell+1}, \N)$) is \emph{bounded} if it has a maximal element, that is, there exists $n \geq 0$ such that for any $\ell \geq n$ the inclusion map $\iota_\ell^{\ell+1}: K_\ell \to K_{\ell+1}$ (resp. $\iota_\ell^{\ell+1}: Z_\ell \to Z_{\ell+1}$) is an isomorphism. An isomorphism class $\U_s(\Phi)$ (resp. $\U_c(\Phi)$) of direct limit groups 
is \emph{bounded} if $\U_s^x(\Phi)$ (resp. $\U_c^x(\Phi)$) is represented by a bounded group chain.

\begin{defn}\label{defn-classif}
Let $(X,G,\Phi)$ be a minimal equicontinuous action, and let $(X,E(G),\widehat{\Phi})$ be the action of its Ellis group. The action $(X,G,\Phi)$ or $(X,E(G),\widehat{\Phi})$ is said to be:
\begin{enumerate}
\item \emph{stable} if the stabilizer group $\U_s(\Phi)$ is bounded, and \emph{wild} otherwise,
\item \emph{algebraically stable} if the centralizer group $\U_c(\Phi)$ is bounded, and \emph{algebraically wild} otherwise,
\item  \emph{wild of finite type} if $\U_s(\Phi)$ is unbounded and represented by a chain of finite subgroups, 
\item \emph{algebraically wild of algebraic finite type} if $\U_c(\Phi)$ is unbounded and represented by a chain of finite subgroups,
\item \emph{wild of flat type} if $\U_s(\Phi)$ is unbounded and $\U_c(\Phi) = \U_s(\Phi)$, namely, the canonical inclusion $\Upsilon^x_c(\Phi) \longrightarrow \Upsilon^x_s(\Phi)$ in \eqref{eq-cs} is an isomorphism for some, and so any $x \in X$. 
\item \emph{dynamically wild} if $\U_s(\Phi)$ is unbounded and not of flat type. 
\end{enumerate} 
\end{defn}

All properties listed in Definition \ref{defn-classif} are invariants of the conjugacy class of a given action $(X,G,\Phi)$ \cite{HL2019b}. 

We require one more concept before stating the classification of actions, which is the definition of a \emph{non-Hausdorff element} in the group $G$ or $E(G)$. The notion of a non-Hausdorff element comes from the study of germinal groupoids associated to an action $(X,G,\Phi)$. In particular, such a groupoid has non-Hausdorff topology if and only if it contains a non-Hausdorff element \cite{Winkelnkemper1983}. No further knowledge of groupoids is required for the rest of the article.

\begin{defn}\label{defn-nonHausdorff}
Let $(X,G,\Phi)$ be an equicontinuous minimal action, and let $E(G)$ be the Ellis group of the action. An element $\widehat{g} \in E(G)$ is \emph{non-Hausdorff} if the following conditions are satisfied at the same time: 
\begin{enumerate}
\item There is $x \in X$ such that $\widehat{g} \cdot x = x$.
\item There is a decreasing sequence of open neighborhoods $\{U_\ell\}_{\ell \geq 0} \subset X$, $\bigcap U_\ell = \{x\}$, such that for any $\ell \geq 0$ the restriction $\widehat{g}|U_\ell$ is not the identity map.
\item For any $\ell \geq 0$ there is a non-empty open subset $V_\ell \subset U_\ell$ such that the restriction $\widehat{g}|V_\ell$ is the identity map.
\end{enumerate}
\end{defn}

Since $G$ is identified with a dense subgroup $\Phi(G)$ of $E(G)$, if $ \widehat{g} \in \Phi(G)$ is non-Hausdorff, then we say that any $g\in G$ such that $\widehat{g} = \Phi(g)$ is a non-Hausdorff element in $G$.

We now list a selection of results from \cite{HL2019a,HL2019b} illustrating the classification in Definition \ref{defn-classif}.

\begin{ex}\label{thm-firstexamples}
\begin{enumerate}
\item {\it Stable actions \cite[Theorems 1.10 and 1.12]{DHL2017}:} Given any finite or any separable profinite group $H$ there exists a torsion-free finite index subgroup $\Gamma \subset \operatorname{SL}(n ,\Z)$, for $n \geq 3$, and an action $\Phi: \Gamma \to \operatorname{Homeo}(X)$ on a Cantor set $X$, such that the stabilizer group chain $K(\Phi)=\{K_\ell\}_{\ell \geq 0}$ of the action is bounded with $K_\ell = H$ for $\ell \geq 0$.
\item {\it Wild actions of finite and flat type \cite[Theorem 1.10]{HL2019a}:} There exists a torsion-free finite index subgroup $\Gamma \subset \operatorname{SL}(n ,\Z)$, for $n \geq 3$, and an uncountable collection of pairwise non-conjugate actions $\Phi_\nu: \Gamma \to \operatorname{Homeo}(X)$ of $\Gamma$ on a Cantor set $X$, such that every action $(X,\Gamma,\Phi_\nu)$ is wild of finite and flat type, that is, for any such action $(X,\Gamma,\Phi_\nu)$ the associated group chain $K(\Phi_\nu) = \{K_\ell\}_{\ell \geq 0}$ is unbounded, and for $\ell \geq 0$ the subgroup $K_\ell$ is finite with $K_\ell = Z_\ell$.
\item {\it Dynamically wild actions not of finite type \cite[Theorem 1.7]{HL2019b}:} Let $(X,G,\Phi)$ be a minimal equicontinuous action, and suppose $E(G)$ contains a non-Hausdorff element. Then the action $(X,G,\Phi)$ is dynamically wild not of finite type, that is, the associated group chain $K(\Phi) = \{K_\ell\}_{\ell \geq 0}$ is unbounded, for $\ell \geq 0$ the subgroup $K_\ell$ is infinite, and $Z_\ell$ is a proper subgroup of $K_\ell$.
\item {\it Dynamically wild actions not of finite type from iterated monodromy groups, \cite[Theorem 1.10]{Lukina2019b} and \cite[Theorem 1.7]{HL2019b}:} In $(X,G,\Phi)$, suppose $X$ is the boundary of a binary tree, and $G$ is the iterated monodromy group associated to the quadratic post-critically finite polynomial $f(x)$ with strictly pre-periodic critical orbit of cardinality at least $3$. Then $G$ contains non-Hausdorff elements, and the action $(X,G,\Phi)$ is dynamically wild not of finite type.
\end{enumerate}
\end{ex}

In the examples of dynamically wild actions in Example \ref{thm-firstexamples}, there are no details about the centralizer group of the action, in particular, it is not known if these actions are algebraically stable or algebraically wild. Our goal in this work is to construct further examples that illustrate the classification in Definition \ref{defn-classif}, paying particular attention to the properties of the centralizer group. We now state our main theorems.

We say that a countable group $H$ is $k$-generated, if it can be generated by $k$ elements. We denote by $\Alt(k)$ the alternating group on $k$ symbols.

\begin{thm}\label{thm-main1}
There exists a $2$-generated countable dense subgroup $H$ of the infinite product $\prod_{i \geq 1} \Alt(m_i)$, with $m_i \geq 7$ and $m_i \to_{i \to \infty} \infty$, and an action $(X,H,\Phi)$ of this group on a Cantor set $X$, which is wild of finite type and algebraically stable with trivial centralizer group $\U_c^x(\Phi)$, $x \in X$. 
\end{thm}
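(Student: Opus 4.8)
The plan is to realize all the required properties through the simplest natural action: the level-uniform action of the full product $P=\prod_{i\ge 1}\Alt(m_i)$ on the boundary $X=\partial\tree$ of the spherically homogeneous rooted tree $\tree$ with branching sequence $(m_i)$. Here $X=\prod_{i\ge 1}\{1,\dots,m_i\}$, and the $i$-th factor $\Alt(m_i)$ acts by permuting the $i$-th coordinate of every point simultaneously; this is a tree automorphism acting in the same way on all vertices of level $i-1$, so it embeds $P$ into $\Aut(\tree)$. I would then take $H=\langle a,b\rangle$ to be the countable subgroup generated by a suitable pair $(a,b)\in P$. Granting such an $H$, the theorem reduces to two coordinatewise computations: that the stabilizer chain $K(\Phi)$ consists of finite groups and is unbounded, and that the centralizer chain $Z(\Phi)$ is trivial.

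The construction of $H$ is the main obstacle, since I need a \emph{single} pair that is simultaneously a generating pair of the whole profinite product. First I would choose the $m_i$ strictly increasing with $m_1\ge 7$, so that the simple factors $S_i:=\Alt(m_i)$ are pairwise non-isomorphic, and write $P_\ell=\prod_{i\le\ell}S_i$ for the finite truncations, each of which is $2$-generated. The goal is a pair $(a,b)\in P$ whose image generates every $P_\ell$, for then $\pr_{\le\ell}(H)=P_\ell$ for all $\ell$ and $H$ is dense in $P$. I would obtain it by an inverse limit argument over the nonempty finite sets of generating pairs of the $P_\ell$: the projection $P_{\ell+1}\to P_\ell$ carries a generating pair to a generating pair, and conversely every generating pair of $P_\ell$ lifts, because a subgroup of $P_\ell\times S_{\ell+1}$ surjecting onto both factors must, by Goursat's lemma, be the whole product once one observes that the simple group $S_{\ell+1}=\Alt(m_{\ell+1})$ is not a composition factor of $P_\ell$. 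Hence the bonding maps between generating-pair sets are surjective, and the inverse limit of these nonempty finite sets is nonempty, producing the desired topological generating pair.

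Next I would record the dynamical properties. The embedding $P\hookrightarrow\Homeo(X)$ is injective (a nontrivial coordinate permutation moves some point), so the action is effective; it is equicontinuous because tree automorphisms act by isometries of the standard ultrametric on $X$; and it is minimal because $P$ acts transitively on $X$ while $H$ is dense. Since $P$ is compact and $\Phi|_P$ is a continuous injection, the Ellis group is $E(G)=\overline{\Phi(H)}=P$, and the isotropy group at $x=(x_i)$ is $E(G)_x=\prod_{i\ge 1}\Alt(m_i)_{x_i}\cong\prod_{i\ge 1}\Alt(m_i-1)$. Choosing the neighborhood basis $\widehat V_\ell=\prod_{i>\ell}S_i$ gives $\widehat U_\ell=\big(\prod_{i\le\ell}\Alt(m_i)_{x_i}\big)\times\prod_{i>\ell}S_i$ and $U_\ell=\{y\in X\mid y_i=x_i,\ i\le\ell\}$. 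A short computation then shows that $\widehat h=(h_i)\in E(G)_x$ fixes $U_\ell$ pointwise exactly when $h_i=\id$ for all $i>\ell$, so $K_\ell=\prod_{i\le\ell}\Alt(m_i-1)$. These are finite and the chain is strictly increasing since $m_{\ell+1}-1\ge 6$, so $\U_s^x(\Phi)=\bigoplus_{i\ge 1}\Alt(m_i-1)$ is unbounded and represented by finite groups: the action is wild of finite type.

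Finally, for the centralizer I would compute $Z_\ell$ coordinatewise. An element $(h_i)\in E(G)_x$ centralizes $\widehat U_\ell$ iff for $i\le\ell$ the component $h_i$ lies in the center of $\Alt(m_i)_{x_i}\cong\Alt(m_i-1)$, and for $i>\ell$ the component $h_i$ centralizes the full group $\Alt(m_i)$. Because $m_i\ge 7$, the group $\Alt(m_i-1)$ is nonabelian simple, hence centerless, forcing $h_i=\id$ for $i\le\ell$; and $\Alt(m_i)$ is $2$-transitive, so its centralizer in $\Sym(m_i)$ is trivial, forcing $h_i=\id$ for $i>\ell$. Thus $Z_\ell=\{\id\}$ for every $\ell$, the centralizer direct limit group $\U_c^x(\Phi)$ is trivial, and the action is algebraically stable with trivial centralizer group. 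The only genuinely delicate point in this scheme is the simultaneous density and $2$-generation of $H$; the identifications of $E(G)$, $K_\ell$ and $Z_\ell$ follow routinely once the level-uniform product action is in place.
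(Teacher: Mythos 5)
Your proposal is correct, and its dynamical half coincides with the paper's: the paper also uses the level-uniform (coordinatewise) product action of $\prod_n \Alt(o(n))$ on $\prod_n X_n$, identifies the Ellis group with the full product and the isotropy group with $\prod_i D_i$ where $D_i$ is a point stabilizer, and obtains exactly your $K_\ell = \prod_{i\le \ell} D_i$ and $Z_\ell = \{\id\}$ (Proposition~\ref{prop-finite}). Where you genuinely diverge is at the crux you correctly single out: producing a single $2$-generated dense subgroup $H$. The paper does this \emph{explicitly}: it chooses distinct odd primes $p(1,n),p(2,n)$ with $p(i,n)\le o(n)<p(1,n)+p(2,n)$, invokes Miller's theorem (Theorem~\ref{t:primes}) to get two cycles $\sigma_{1,n},\sigma_{2,n}$ of those prime orders generating $\Alt(o(n))$, and then uses the Chinese Remainder Theorem on the orders to show that suitable powers of $\sigma_a=(\sigma_{a,n})_n$ isolate any single coordinate, so $H=\langle \sigma_1,\sigma_2\rangle$ surjects onto every finite truncation (Lemma~\ref{lem-finiteproduct}), hence is dense. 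You instead run a soft argument: take the $m_i$ strictly increasing so the simple factors are pairwise non-isomorphic, lift generating pairs along $P_{\ell+1}\to P_\ell$ via Goursat's lemma, and extract a topological generating pair by compactness (an inverse limit of nonempty finite sets of generating pairs). Both routes are valid. Yours is more general and shorter---it works verbatim for any sequence of pairwise non-isomorphic $2$-generated nonabelian finite simple groups---but it is non-constructive and needs the $m_i$ to be distinct; the paper's is explicit, does not require distinct $o(n)$ (only distinct primes), and its prime bookkeeping is reused later (Remark~\ref{rem-zhang}, where Zhang's bounded-gaps theorem lets one take all $o(n)$ prime). Finally, for $Z_\ell=\{\id\}$ you invoke centerlessness of $\Alt(m_i-1)$ and triviality of the centralizer of $\Alt(m_i)$, whereas the paper constructs an explicit non-commuting $3$-cycle inside the point stabilizer (Lemma~\ref{lem-noncommute}); these are the same fact in different clothing, and your version is if anything cleaner.
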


A wild action which is algebraically stable is necessarily dynamically wild, since in this case the group chain $\{Z_\ell\}_{\ell \geq 0}$ has a maximal element, while the group chain $\{K_\ell\}_{\ell \geq 0}$ is unbounded. We remark that the action in Theorem \ref{thm-main1} is dynamically wild. Comparing this result with item (2) in Example \ref{thm-firstexamples}, we observe that a wild action of finite type may be of flat type, and so algebraically wild, as in item (2) of Example \ref{thm-firstexamples}, as well as dynamically wild and algebraically stable, as in Theorem \ref{thm-main1}.

Many minimal equicontinuous group actions arise as actions on the boundary $\widehat{X}$ of a spherically homogeneous rooted tree $\tree$ with spherical index $m = (m_1,m_2,\ldots)$. The set of vertices $V(\tree)$ of such a tree is divided into \emph{levels} $V_i$, $i \geq 0$, where $V_0 = \{*\}$ is a singleton, and for $i \geq 1$ the entry $m_i$ of the spherical index is equal to the number of vertices in $V_i$ connected by edges to the same vertex in $V_{i-1}$.  The boundary $\widehat{X}$ is the set of infinite connected paths of edges in the tree $\tree$ with metric topology, see Section \ref{sec-trees} for details. With this topology, $\widehat X$ is a Cantor set. The group $\Aut(\tree)$ of automorphisms of $\tree$ consists of maps which preserve the structure of the tree $\tree$, that is, they fix the root $\{*\}$ and map infinite connected paths to infinite connected paths. Thus every element in $\Aut(\tree)$ induces a homeomorphism of $\widehat X$. It is well-known that for a spherically homogeneous tree $\tree$ with spherical index $m$ the automorphism group $\Aut(T) $ is isomorphic to the infinite wreath product of the symmetric groups $\Sym(m_i)$ on $m_i$ symbols, for $i \geq 1$, see Section \ref{subsec-wreathproduct} for details. For $i \geq 1$ let $A_i \subset \Sym(m_i)$ be a subgroup, and denote by $A^\infty$ the wreath product of $A_i$, $i \geq 1$, defined as in Section \ref{subsec-wreathproduct}.
By Remark \ref{rem-countablegen}, $A^\infty$ contains a countably generated dense subgroup, which we denote by $G$.

\begin{thm}\label{thm-main2}
Let $\tree$ be a spherically homogeneous tree with spherical index $m = (m_1,m_2,\ldots)$.
Let $G$ be a countably generated group acting minimally on the boundary $\widehat X$ of $\tree$, and suppose that the Ellis group $E(G)$ of the action is isomorphic to the wreath product $A^\infty$ of finite subgroups $A_i \subset \Sym(m_i)$, such that $A_i$ acts transitively on the set of $m_i$ symbols, for $i \geq 1$. Then the action $(\widehat X, G, \Phi)$ of $G$ on $\widehat{X}$ is dynamically wild and algebraically stable with trivial centralizer group $\U_c^x(\Phi)$, for $x \in \widehat X$. 
\end{thm}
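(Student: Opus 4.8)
The plan is to choose the fundamental system $\{\widehat V_\ell\}_{\ell\ge 0}$ to be the level stabilizers of the tree and then to analyze the chains $\{K_\ell\}$ and $\{Z_\ell\}$ inside the wreath product $E(G)=A^\infty$ by means of the rigid vertex stabilizers of $\tree$. Write the chosen point as a path $x=(x_0,x_1,x_2,\dots)$ with $x_i\in V_i$, and take $\widehat V_\ell\le E(G)$ to be the pointwise stabilizer of the level $V_\ell$; this is a clopen normal subgroup and $\{\widehat V_\ell\}$ is a neighbourhood basis of the identity. With this choice one has $E(G)_x=(A^\infty)_x$, the stabilizer of the path $x$, and $\widehat U_\ell=E(G)_x\,\widehat V_\ell=\St(x_\ell)$ is the stabilizer of the vertex $x_\ell$, so that $U_\ell$ is the cylinder $[x_\ell]$ of all paths through $x_\ell$. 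For a vertex $v$ let $\operatorname{Rist}(v)\le E(G)$ denote the rigid stabilizer, i.e.\ the subgroup of automorphisms supported on the subtree $\tree_v$ hanging below $v$; it is isomorphic to the shifted wreath product $A_{|v|+1}\wr A_{|v|+2}\wr\cdots$. Since $G$ acts minimally each $A_i$ is transitive on $m_i$ symbols, and since $\widehat X$ is a Cantor set infinitely many $m_i$ satisfy $m_i\ge 2$; consequently every $\operatorname{Rist}(v)$ is a nontrivial (in fact infinite) group.

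For the centralizer chain I claim that $Z_\ell=\{\id\}$ for all $\ell$, which immediately gives that $\U_c^x(\Phi)$ is trivial and that the action is algebraically stable. Let $\widehat h\in Z_\ell$, so that $\widehat h\in E(G)_x$ centralizes $\widehat U_\ell$. The key point is that for every \emph{off-path} vertex $w$ (a vertex not lying on the path $x$) the rigid stabilizer $\operatorname{Rist}(w)$ fixes the entire path $x$, since $\tree_w$ is disjoint from $x$; hence $\operatorname{Rist}(w)\le E(G)_x\subseteq\widehat U_\ell$, and therefore $\widehat h$ centralizes $\operatorname{Rist}(w)$. Because conjugation by a tree automorphism sends $\operatorname{Rist}(w)$ onto $\operatorname{Rist}(\widehat h(w))$, and distinct vertices have distinct nontrivial rigid stabilizers, the fact that $\widehat h$ normalizes $\operatorname{Rist}(w)$ forces $\widehat h(w)=w$. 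Thus $\widehat h$ fixes every off-path vertex, while $\widehat h\in E(G)_x$ fixes every on-path vertex $x_i$; as these together exhaust $V(\tree)$, we conclude $\widehat h=\id$.

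For the stabilizer chain, $K_\ell$ consists of those $\widehat h\in E(G)_x$ acting as the identity on $U_\ell$, equivalently acting trivially on the subtree $\tree_{x_\ell}$. To prove that $\{K_\ell\}$ is unbounded I exhibit an element of $K_{\ell+1}\setminus K_\ell$ for each $\ell$ with $m_{\ell+1}\ge 2$: pick an off-path child $w\neq x_{\ell+1}$ of $x_\ell$ and any nontrivial $g\in\operatorname{Rist}(w)$. Then $g$ fixes $x$ (as $w$ is off-path), acts trivially on $\tree_{x_{\ell+1}}$ (as $\tree_w$ and $\tree_{x_{\ell+1}}$ are disjoint), yet acts nontrivially on $\tree_{x_\ell}\supseteq\tree_w$; hence $g\in K_{\ell+1}\setminus K_\ell$ and the inclusion $K_\ell\hookrightarrow K_{\ell+1}$ is proper. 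As infinitely many $m_i\ge 2$, the chain never stabilizes, so $\U_s(\Phi)$ is unbounded. Since $\U_s(\Phi)$ is then nontrivial while $\U_c(\Phi)$ is trivial, the action cannot be of flat type, and is therefore dynamically wild, completing the proof.

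I expect the main obstacle to lie not in the two dynamical computations above but in the structural identifications $\widehat U_\ell=\St(x_\ell)$ and $U_\ell=[x_\ell]$, together with the verification that the rigid stabilizers behave as claimed; once these facts are established, the conjugation argument renders the triviality of every $Z_\ell$ essentially automatic. The one place where care is genuinely required is the assertion that each $\operatorname{Rist}(w)$ is nontrivial and that normalizing it pins down the vertex $w$, both of which ultimately rest on every subtree of $\tree$ containing infinitely many branching levels, which is precisely the content of the Cantor-set hypothesis.
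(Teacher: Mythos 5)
Your proof is correct, and its key mechanism differs from the paper's. The paper works with the same cylinder sets $U_\ell$ (paths through $x_\ell$) but splits the theorem in two: dynamical wildness is obtained by citing that infinite wreath products of finite groups contain non-Hausdorff elements and that such elements force wildness not of finite type, supplemented by an exact computation $K_n \cong \prod_{i=0}^{n-1} S_i$, where $S_i \cong (A_{i+1})_{x_{i+1}} \ltimes A_{i+2}^{|V^i_{i+1}|} \ltimes \cdots$ is the image of the discriminant group $\cD_{\widehat x}$ restricted to the subtree hanging off the path at $x_i$; triviality of each $Z_n$ is then proved by a commutation computation in wreath coordinates: restrictions to $V_{i+1}$ are written as pairs $(g_1,g_2) \in H_i \ltimes A_{i+1}^{|V_i|}$, and testing $\widehat g \in Z_n$ against elements $(h_1,h_2)$ whose second coordinate is supported at a single vertex forces $g_1$ to fix every vertex of $V_i$. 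Your rigid-stabilizer argument replaces both steps at once: for off-path $w$ the groups $\operatorname{Rist}(w) \leq E(G)_{x} \subseteq \widehat U_\ell$ give explicit witnesses for the proper inclusions $K_\ell \subsetneq K_{\ell+1}$ (so you bypass the non-Hausdorff-element machinery entirely), and they serve as the test subgroups whose centralization, via $\widehat h\operatorname{Rist}(w)\widehat h^{-1} = \operatorname{Rist}(\widehat h(w))$ and disjointness of supports of same-level rigid stabilizers, forces $\widehat h$ to fix every off-path vertex. Both proofs exploit the same structural feature of the full wreath product --- it contains, for every off-path vertex, a nontrivial group of elements supported below that vertex, and all such elements lie in the discriminant group --- but yours is more self-contained and conceptual, at the cost of yielding less information along the way (the paper's computation additionally shows each $K_n$ is infinite, hence the action is wild \emph{not of finite type}, and identifies $\cD_{\widehat x} \cong \prod_{i \geq 0} S_i$). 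The identifications you flagged as requiring care ($\widehat U_\ell = \St(x_\ell)$, $U_\ell$ the cylinder of $x_\ell$, nontriviality of each $\operatorname{Rist}(w)$, and the conjugation formula) are all correct and routine here precisely because $E(G)$ is the \emph{full} wreath product $A^\infty$ with every $A_i$ transitive (the latter forced by minimality), so no gap remains.
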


To rephrase the statement of Theorem \ref{thm-main2}, we show that if a countable group $G$, acting effectively on the boundary of a spherically homogeneous tree, is a dense subgroup of the wreath product of finite groups, then the action is always algebraically stable with trivial centralizer group.

\begin{ex}\label{ex-finitelygen}
In Theorem \ref{thm-main2}, one can specify the conditions on the groups $A_i$ under which $G$ is finitely generated. Let $A_i = \Alt(m_i)$ for $i \geq 5$, where $m=(m_1,m_2,\ldots)$ is the spherical index. If $m_i \geq 5$ for $i \geq 1$, then each $\Alt(m_i)$ is simple, which implies that it is equal to its commutator subgroup. Then by \cite{Bondarenko2010} the infinite wreath product of the groups $\Alt(m_i)$, $ i\geq 1$, defined as in Section \ref{subsec-wreathproduct}, is topologically finitely generated. Since $\Alt(m_i) \subset \Sym(m_i)$, and $\Alt(m_i)$ acts transitively on the set of $m_i$ elements, a dense subgroup $G$ of the wreath product acts minimally on the boundary $\widehat{X}$ of the tree $\tree$. By \cite[Theorem 1.10]{Lukina2019b} any infinite wreath product of finite groups contains non-Hausdorff elements, which implies that the action of $G$ on $\widehat X$ is dynamically wild not of finite type. Theorem \ref{thm-main2} shows that this action is always algebraically stable with trivial centralizer group.
\end{ex}

Another interesting class of examples which belongs to the family of examples in Theorem \ref{thm-main2} is the following.

\begin{ex}\label{ex-arith}
For $d \geq 2$, denote by $C(d)$ the cyclic group of order $d$, and by $C(d)^\infty$ the infinite wreath product of such cyclic groups. Since $C(d)^\infty$ is a profinite group, it is topologically countably generated. Let $G$ denote a countable dense subgroup of $C(d)^\infty$, and note that $G$ acts minimally on the boundary $\widehat{X}$ of a spherically homogeneous tree $\tree$ with constant spherical index $m = (d,d,\ldots)$.
We note that actions of the wreath product $C(d)^\infty$ naturally arise in number theory and arithmetic dynamics. For instance, it was shown in \cite[Theorem 1.1]{BHL2017} that if $\varphi_d$ is a specific choice of a unicritical polynomial of prime degree $d$, and $K$ is a number field containing the $d$-th primitive root of unity, then the image of the arboreal representation of the absolute Galois group of $K$ into the group of automorphisms of a tree $\tree$ with constant spherical index $m = (d,d,\ldots)$, defined by $\varphi_d$, is a finite index subgroup of $C(d)^\infty$. If $d=3,5$ or $7$, then the image of the representation defined by $\varphi_d$ equals $C(d)^\infty$. For a family of specific quadratic polynomials $\psi_p$, whose coefficients are determined by an odd prime $p$, \cite[Theorem 1.2]{BHL2017} shows that the corresponding arboreal representation of the absolute Galois group of $K$ is equal to $C(2)^\infty$ for $p< 5000$, and also for other values of $p$ which satisfy certain congruences.
\end{ex}

Finally we construct a family of actions which are dynamically and algebraically wild with non-trivial centralizer direct limit group.

\begin{thm}\label{thm-main3}
Let $X$ and $Y$ be Cantor sets, and let $H$ and $G$ be countable groups. Suppose $(X,H,\Phi)$ is a minimal equicontinuous action which is wild of flat type, and suppose $(Y,G,\Psi)$ is a minimal equicontinuous action which is dynamically wild and algebraically stable with trivial centralizer group. Then the product action $(X \times Y, H \times G, \Phi \times \Psi)$ is dynamically wild and algebraically wild. That is, the stabilizer and centralizer subgroups $\U_s^{(x,y)}(\Phi \times \Psi)$ and $\U_c^{(x,y)}(\Phi \times \Psi)$ are both unbounded, and $\U_s^{(x,y)}(\Phi \times \Psi) \ne \U_c^{(x,y)}(\Phi \times \Psi)$, for $(x,y) \in X \times Y$. In addition, if $(X,H,\Phi)$ is wild of finite type, then $(X\times Y,H\times G, \Phi \times \Psi)$ is dynamically wild and algebraically wild of algebraic finite type.
\end{thm}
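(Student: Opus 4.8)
The plan is to reduce everything to the single structural observation that, when an action splits as a direct product of two independent actions, every ingredient entering the classification of Definition \ref{defn-classif} splits as the corresponding product. First I would identify the Ellis group of the product action. Since $H\times G$ acts on $X\times Y$ by $(h,g)\cdot(a,b)=(h\cdot a,g\cdot b)$, the image $(\Phi\times\Psi)(H\times G)$ equals $\Phi(H)\times\Psi(G)$, and with a max-metric on $X\times Y$ the closure of a product of sets of homeomorphisms of the form $f\times g$ is the product of the closures; hence $E(H\times G)\cong E(H)\times E(G)$. The product action is again minimal (the orbit of $(a,b)$ is $(H\cdot a)\times(G\cdot b)$, dense by minimality of each factor), equicontinuous, and effective, so the whole framework applies. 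Fixing base points $x\in X$, $y\in Y$, the isotropy group factors as $E(H\times G)_{(x,y)}=E(H)_x\times E(G)_y$.

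Next I would choose the auxiliary data compatibly. By the invariance theorem \cite[Theorem 4.15]{HL2019b} the isomorphism classes $\U_s$ and $\U_c$ do not depend on the choice of fundamental system $\{\widehat V_\ell\}$ or base point, so I am free to use the product system $\widehat V_\ell^{H\times G}=\widehat V_\ell^H\times\widehat V_\ell^G$, which is again a fundamental system of clopen normal finite-index subgroups of $E(H)\times E(G)$. Then $\widehat U_\ell^{H\times G}=\widehat U_\ell^H\times\widehat U_\ell^G$ and $U_\ell^{H\times G}\cong U_\ell^H\times U_\ell^G$. Checking the defining conditions \eqref{eq-Kchain} and \eqref{eq-Zchain} directly yields the key factorizations
\[
K_\ell^{H\times G}=K_\ell^H\times K_\ell^G,\qquad Z_\ell^{H\times G}=Z_\ell^H\times Z_\ell^G,
\]
since $(\widehat h,\widehat g)$ acts as the identity on $U_\ell^H\times U_\ell^G$ if and only if each coordinate does, and its adjoint action fixes $\widehat U_\ell^H\times\widehat U_\ell^G$ pointwise if and only if $\widehat h$ centralizes $\widehat U_\ell^H$ and $\widehat g$ centralizes $\widehat U_\ell^G$. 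Because the maps $\iota_\ell^{\ell+1}$ also factor, and because each $K_\ell,Z_\ell$ is a genuine subgroup of the isotropy group with literal inclusion maps, the direct limits are just the increasing unions. Thus $\U_s^{(x,y)}(\Phi\times\Psi)=\U_s^x(\Phi)\times\U_s^y(\Psi)$ and $\U_c^{(x,y)}(\Phi\times\Psi)=\U_c^x(\Phi)\times\U_c^y(\Psi)$, with $\U_c$ a genuine subgroup of $\U_s$ in each case.

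Finally I would read off the three conclusions, using that the product chain $\{K_\ell^H\times K_\ell^G\}$ stabilizes at a level precisely when both factor chains do, so it is bounded if and only if both factors are bounded. As $\Phi$ is wild of flat type and $\Psi$ is dynamically wild, both $K$-chains are unbounded, so $\U_s^{(x,y)}(\Phi\times\Psi)$ is unbounded. For the centralizer, $Z^G$ is bounded (trivial in the limit), while flat type forces $\U_c^x(\Phi)=\U_s^x(\Phi)$ as subgroups and forces the $Z^H$-chain to be unbounded: if $Z^H$ stabilized to a subgroup $Z$, then $\bigcup_\ell Z_\ell^H=Z=\bigcup_\ell K_\ell^H$ would force $K_\ell^H=Z$ for all large $\ell$, contradicting unboundedness of $K^H$. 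Hence $Z^{H\times G}$ is unbounded and the action is algebraically wild. For the inequality, I would argue at the level of the honest containment $\U_c^{(x,y)}\subseteq\U_s^{(x,y)}$: the second factor of $\U_c^{(x,y)}$ is $\U_c^y(\Psi)=\{1\}$, whereas the second factor of $\U_s^{(x,y)}$ is $\U_s^y(\Psi)$, which is unbounded and in particular nontrivial, so $\U_c^{(x,y)}(\Phi\times\Psi)=\U_s^x(\Phi)\times\{1\}$ is a proper subgroup of $\U_s^x(\Phi)\times\U_s^y(\Psi)=\U_s^{(x,y)}(\Phi\times\Psi)$. This shows the product is dynamically wild (unbounded $\U_s$ and not of flat type) and algebraically wild.

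The main obstacle is not the algebra of the factorization, which is routine, but two bookkeeping points. First, the factorizations $K_\ell^{H\times G}=K_\ell^H\times K_\ell^G$ and $Z_\ell^{H\times G}=Z_\ell^H\times Z_\ell^G$ hold literally only for the product fundamental system, so the argument genuinely depends on invoking \cite[Theorem 4.15]{HL2019b} to know this convenient choice computes the correct invariants. Second, and more delicately, the relations ``$\U_s=\U_c$'' and ``$\U_s\ne\U_c$'' must be read as the canonical map $\U_c\to\U_s$ induced by the inclusions $Z_\ell\subseteq K_\ell$ being, or failing to be, surjective --- equivalently, as honest containment of subgroups of the isotropy group --- rather than as abstract group isomorphism; otherwise the final step could fail for pathological groups satisfying $A\times B\cong A$. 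Keeping the comparison at the level of the literal inclusion $\U_c^{(x,y)}\subseteq\U_s^{(x,y)}$ is exactly what makes the conclusion clean.
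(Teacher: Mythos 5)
Your proposal is correct and follows essentially the same route as the paper: identify $E(H\times G)\cong E(H)\times E(G)$, compute the chains with respect to the product neighborhoods $U_\ell\times V_\ell$, use the flat-type $H$-factor to make both the stabilizer and centralizer chains unbounded, and use the triviality of the $G$-factor centralizer together with the nontriviality of its stabilizer chain to separate $\U_s^{(x,y)}$ from $\U_c^{(x,y)}$. The only cosmetic difference is that you prove the full factorizations $K_\ell=K_\ell^H\times K_\ell^G$ and $Z_\ell=Z_\ell^H\times Z_\ell^G$, whereas the paper gets by with the one-sided inclusions $K_\ell^H\times\{\id\}\subset K_\ell$ and $Z_\ell^H\times\{\id\}\subset Z_\ell$ plus an explicit witness $(\widehat e_{H},\widehat g)$ with $\widehat g\in K_n^G\setminus\{\id\}$ lying in $K_n$ but in no $Z_m$ (since every $Z_m^G$ is trivial); likewise your reading of flat type and of ``$\U_s\ne\U_c$'' via honest containments matches the paper's levelwise use of $K_n^H=Z_n^H$.
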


For example, in Theorem~\ref{thm-main3}, we can take $(X,H,\Phi)$ given by item~(2) of Example~\ref{thm-firstexamples}, and $(Y,G,\Psi)$ given by Theorem~\ref{thm-main2}.

We finish by discussing open problems. 

As explained in detail in Section \ref{sec-trees}, every equicontinuous minimal Cantor action can be represented as an action on the boundary of a spherically homogeneous tree $\tree$ with spherical index $m = (m_1,m_2,\ldots)$. Examples in Theorems \ref{thm-main1} and \ref{thm-main2} are constructed as such actions. As a rule, a representation of an action on the boundary of a tree is not unique; in particular, the spherical index of the tree may be different for different representations of the same action. For instance, if an action admits a representation on the boundary of a tree with constant spherical index $m = (d,d,\ldots)$, then it admits a representation on the boundary of a tree with strictly increasing spherical index $m' = (d, d^2, d^3, \ldots)$. However, if  the entries of the spherical index $m$ are distinct primes, then an action does not admit a representation on the boundary of a tree with constant spherical index. For example, the wild actions of flat and finite type in item (2) of Example \ref{thm-firstexamples} have this property, that is, they have representations onto the boundary of a tree where the spherical index is an increasing sequence of distinct primes, and so they do not admit a representation on the boundary of a tree with constant spherical index. We show in Proposition~\ref{p:limsupmn} that the family of examples in Theorem \ref{thm-main1} also do not admit representations on the boundary of a tree with constant spherical index. On the other hand, the examples in Theorems \ref{thm-main2} include actions of wreath product groups on trees with constant spherical index.

Actions which admit representations on the boundary of a tree with constant spherical index are probably most interesting from the point of view of applications, as they may arise as dynamical objects, associated to iterations of the same map (attractors in dynamical systems, arboreal representations of Galois groups associated to a polynomial, etc.). Therefore, the following problem is of interest.

\begin{prob}\label{prob-constant}
In the classification of Definition \ref{defn-classif}, determine what classes of actions do not admit representations on the boundary of a tree with constant spherical index.
\end{prob}

For instance, so far we do not have examples of wild actions of flat type or of finite type which admit representations onto the boundary of a tree with constant spherical index, but we expect that such examples exist.

A related question is whether it is possible to realize a given Cantor group action as a holonomy action on a transverse section of a minimal set of a foliation of a smooth manifold. In particular, we can ask this question for the families of examples constructed in Theorems \ref{thm-main1}, \ref{thm-main2} and \ref{thm-main3}. A necessary condition for this question having a positive answer is that the Cantor set with the metric, compatible with the action, admits a bi-Lipschitz embedding into $\mathbb{R}^n$ \cite{Hurder2017}. In our examples metrics on Cantor sets are determined by tree structures, and it is explained in Remark \ref{rem-assouad} that if the boundary of a spherically homogeneous tree with metric defined in Section \ref{sec-chains} admits a bi-Lipschitz embedding into $\mathbb{R}^n$, then the tree must have bounded spherical index. For the family of examples in Theorem \ref{thm-main1}, by Proposition \ref{p:limsupmn} any representation of the action onto the boundary of a spherically homogeneous tree has unbounded spherical index. Thus no action in Theorem \ref{thm-main1} can be realized as a holonomy action on a transverse section of a foliation of a smooth manifold.

In Example \ref{ex-arith}, a dense subgroup $G$ of the wreath product $C(d)^\infty$ acts on the boundary of a spherically homogeneous tree with constant spherical index $m=(d,d,\ldots)$, but the acting group $G$ is infinitely generated. Therefore, the family in Example \ref{ex-arith} cannot be realized as holonomy actions on a transverse section of a foliation of a compact smooth manifold, since such actions must be compactly generated.

Among actions described in Theorem \ref{thm-main2} there are actions of finitely generated groups, for instance, a dense subgroup $G$ of the infinite wreath product of alternating groups in Example \ref{ex-finitelygen}. In this example, we can also choose the spherical index of the tree to be constant. The action of a dense subgroup of an infinite wreath product group is always wild, so by the observation before \cite[Corollary 1.12]{HL2019a} it cannot be realized as a holonomy action on a transverse section of a real-analytic foliation of a compact manifold. The analogous question for smooth foliations of compact manifolds is open.

\begin{prob}\label{prob-embedding}
Let $\tree$ be a spherically homogeneous tree with constant spherical index $m = (d,d,\ldots)$, for $d \geq 2$, and let a finitely generated group $G \subset \Aut(\tree)$ act on the boundary $\widehat X$ of $\tree$ minimally. Suppose the closure $E(G)$ of the action is isomorphic to the infinite wreath product $A^\infty$, where $A \subset \Sym(d)$ is a finite permutation group. Determine if the action of $G$ on $\widehat X$ is conjugate to a holonomy action on a subset of a transverse section of a $C^r$ foliation of a manifold, for $r = 1,2,\ldots, \infty$.
\end{prob}

\emph{Acknowledgements}. The first and the third authors thank for hospitality the Ritsumeikan University, where a part of this work was done. We are grateful to the anonymous referee for careful reading and helpful suggestions, which improved the clarity of the paper.

\section{Group chains}\label{sec-chains}

Below we describe a basic class of examples of equicontinuous group actions on Cantor sets. We give a brief overview of the construction without proofs. Proofs and further details can be found, for instance, in \cite[Section 3]{HL2019b}. In this section we denote by $\X$ any Cantor set, equipped with an equicontinuous action of a countable group $G$, and we reserve the notation $X$ for the Cantor set obtained by the inverse limit construction in \eqref{eq-invlimx}.

\subsection{Actions from group chains}
Let $G$ be a countably generated group. For $n \geq 1$, let $G_n$ denote a finite index subgroup of $G$, such that these subgroups form a decreasing chain $
\cG: G=G_0\supset G_1\supset G_2 \supset \ldots\;
$ 
We assume that for any $n \geq 0$ the group $G_{n+1}$ is a proper subgroup of $G_n$, that is, $|G_n : G_{n+1}| \geq 2$.

For $n\geq 0$ the coset space $G/G_n$ is a finite set. If, in addition, $G_n$ is a normal subgroup of $G$, then $G/G_n$ is a finite group. We are interested predominantly in the case when $\cG$ is \emph{not} a chain of normal subgroups.  

For each $n \geq 0$, there are surjective maps of coset spaces $\nu^{n+1}_n:G/G_{n+1} \to G/G_n$, given by the coset inclusion $g G_{n+1} \to g G_n$. By the standard argument the inverse limit space
  \begin{align}\label{eq-invlimx}X  & = \lim_{\longleftarrow}\{\nu^{n+1}_n: G/G_{n+1} \to G/G_n  \mid n\geq 0\} \\  \nonumber  & \phantom{--------} = \left\{(g_0 G_0 ,g_1 G_1,\ldots) \in \prod_{n \geq 0} G/G_n \mid \nu^{n+1}_n (g_{n+1}G_{n+1}) = g_nG_n \right\}\end{align}
is a Cantor set with respect to the product (Tychonoff) topology. 
The group $G$ acts by left translations on each coset space $G/G_n$, and so there is an induced group action
  \begin{align}\label{eq-Gaction}G \times X \to X: (h, (g_n G_n)_{n \geq 0}) \mapsto (hg_n G_n)_{n\geq 0}, \end{align}
where $(g_n G_n)_{n \geq 0} = (g_0 G_0,g_1G_1,\ldots)$ denotes a sequence which is an element of $X$. Since the action of $G$ on every coset space $G/G_n$, $n \geq 0$, is transitive, the action of $G$ on $X$ is minimal. The space $X$ can be given an ultrametric $D$, for example, by
 \begin{align}\label{eq-metricd} D((g_n G_n)_{n \geq 0}, (h_n G_n)_{n \geq 0}) = \frac{1}{2^m}, & & \textrm{ where }m = \min\{n \geq 0 \mid g_n G_n \ne h_n G_n\}, \end{align}
that is, this metric measures for how long two sequences of cosets $(g_nG_n)_{n \geq 0}$ and $(h_n G_n)_{n \geq 0}$ coincide. Since $G$ acts on each coset space $G/G_n$ by permutations, it acts on $X$ by isometries relative to the metric $D$. It follows that the action of $G$ is equicontinuous. We denote this action by $(X,G)$, omitting the notation for the homomorphism $G \to \operatorname{Homeo}(X)$, as it is determined by the construction. 

Basic sets of the product topology on $X$ are given by
\begin{align}\label{eq-basicset}U_{g,m} = \{(g_n G_n)_{n \geq 0} \in X \mid g_m G_m = g G_m\}, \end{align}
 where $m \geq 0$ and $g \in G$. The isotropy subgroup of the action of $G$ at a basic set $U_{g,m}$ is given by
  \begin{align}\label{eq-isotropy}\Iso(U_{g,m}) = \{h \in G \mid h \cdot (g_n G_n)_{n \geq 0} \in U_{g,m} \textrm{ for all }(g_n G_n)_{n \geq 0} \in U_{g,m}\}.\end{align}
Then $\Iso(U_{e,m}) = G_m$, where $e$ is the identity in $G$, and $\Iso(U_{g,m}) = g G_mg^{-1}$ for any $g \in G$.

If $(\X, G , \Phi)$ is any minimal equicontinuous action of a countably generated group $G$ on a Cantor set $\X$, then one can associate to the action a group chain as above. The procedure is folklore, and it is described in detail for instance in \cite[Appendix]{DHL2016}. The idea of the construction is as follows: given an open neighborhood $V \subset \X$ of $x$, there exists a clopen subset $x \in U \subset V$ such that the set of return times of the action of $G$ to $U$ is equal to the subgroup $G_U$. This subgroup is precisely the isotropy group of the action at $U$ defined similarly to the one in formula \eqref{eq-isotropy}. It follows that there exists a decreasing chain $U_0 = \X \supset U_1 \supset U_2 \supset \ldots$ of clopen neighborhoods of $x$ in $X$, with $\bigcap_{n \geq 0} U_n = \{x\}$, such that $G_n : = G_{U_n} = \Iso(U_n)$ is the isotropy subgroup of the action of $G$ at $U_n$. For each $U_n$, $n \geq 0$, the translates $\{g \cdot U_n\}_{g \in G}$ form a finite partition of $\X$, and it follows that the index of $G_n$ in $G$ is finite. Thus we obtain a decreasing sequence of finite index subgroups $G_0 = G \supset G_1 \supset \ldots$, associated to the action, and a dynamical system $(X,G)$ defined by the formulas \eqref{eq-invlimx} and \eqref{eq-Gaction}. The last step of the construction is to build a homeomorphism $\phi: \X \to X$ such that the actions of $G$ on $\X$ and $X$ commute and such that $\phi(x) = (eG_n)_{n \geq 0} \in X$, where $eG_n$ denotes the coset of the identity $e$ in $G/G_n$, see  \cite[Appendix]{DHL2016} for details.

\subsection{Groups $E(G)$ and $E(G)_x$ from group chains}\label{subsec-EG}
Given an action $(\X,G,\Phi)$ one can use the associated group chain $\{G_n\}_{n \geq 0}$ to compute the Ellis group $E(G)$ and the isotropy group $E(G)_x$ of the action of $E(G)$ at $x \in \X$ as follows. 

For each $n \geq 0$, if $h \in g\,G_n$, then $g\, G_n \, g^{-1} = h\, G_n\, h^{-1}$, which implies that the number of conjugacy classes of $G_n$ in $G$ is less or equal to the index of $G_n$ in $G$. Therefore, $G_n$ has a finite number of distinct conjugates in $G$. Each conjugate of $G_n$ has the same index in $G$ as $G_n$. Then the intersection of a finite number of finite index subgroups
\[
C_n=\bigcap_{g\in G} gG_ng^{-1}
\] 
is a finite index normal subgroup in $G$. It is immediate that $C_n$ is the largest subgroup of $G_n$ which is normal in $G$.

Since $C_n$ is a normal subgroup of $G$, the coset space $Q_n=G/C_n$ is a finite group. Let $\pi^{n+1}_n\colon Q_{n+1}\to Q_n$ be the homomorphism of finite groups induced by the inclusion $C_{n+1}\subset C_n$. Then the inverse limit space
\begin{align*}
\widehat G_\infty &= \varprojlim\{\pi_{n+1}\colon Q_{n+1}\to Q_n\mid n\geq 0\} \\ & \phantom{--------} = \left\{(g_0 C_0 ,g_1 C_1,\ldots) \in \prod_{n \geq 0} G/C_n \mid \pi^{n+1}_n (g_{n+1}C_{n+1}) = g_nC_n \right\}
\end{align*}

is a profinite group. Also, consider the inverse limit group
\[
	\mathcal{D}_x=\varprojlim\{\pi^{n+1}_n\colon G_{n+1}/C_{n+1}\to G_n/C_n\mid n\geq 0\}\subset \widehat G_\infty\;
	\]
called the \emph{discriminant group} of the action. The following result was proved in \cite{DHL2016}.

\begin{thm}[{\cite[Theorem~4.4]{DHL2016}}]\label{thm-quotientspace}
Let    $(\X,G,\Phi)$ be an   equicontinuous minimal Cantor action, let $x \in \X$,  and let $\cG \equiv \{G_{n}\}_{n \geq 0}$ be an associated group chain at $x$.
Then there is an isomorphism of topological groups $\psi: E(G) \to \widehat G_{\infty}$ such that the restriction $\psi: E(G)_x \to \cD_x$ is an isomorphism.
\end{thm}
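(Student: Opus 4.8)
The plan is to reconstruct $E(G)$ level by level out of the finite permutation actions of $G$ on the coset spaces $G/G_n$, exploiting that this action has kernel exactly the normal core $C_n$. Via the equivariant homeomorphism $\phi\colon \X \to X$ onto the inverse limit model \eqref{eq-invlimx} sending $x$ to $(eG_n)_{n\geq 0}$, it suffices to treat the model action $(X,G)$ of \eqref{eq-invlimx}--\eqref{eq-Gaction}: conjugation by $\phi$ identifies the two Ellis groups and carries $E(G)_x$ to the isotropy at $(eG_n)_{n\geq 0}$. For fixed $n$, the basic clopen sets $\{U_{g,n}\mid g\in G\}$ of \eqref{eq-basicset} form a finite partition $\mathcal{P}_n$ of $X$ indexed by $G/G_n$, with each cell of diameter at most $2^{-n}$ in the metric \eqref{eq-metricd}, so distinct cells are $2^{-n}$-separated. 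Every $\Phi(g)$ permutes $\mathcal{P}_n$, and since $E(G)$ is the uniform closure of $\Phi(G)$, an element $\widehat{g}\in E(G)$ approximated uniformly to within $2^{-n}$ by some $\Phi(g_i)$ must carry each cell of $\mathcal{P}_n$ into the same cell as $\Phi(g_i)$ does, hence permutes $\mathcal{P}_n$ by the same permutation. This yields a homomorphism $\rho_n\colon E(G)\to\Sym(G/G_n)$, locally constant (so continuous) in the uniform topology, whose image is the closure of $\rho_n(\Phi(G))$; since $\rho_n\circ\Phi$ is the left-translation representation with kernel $C_n$, we have $\rho_n(\Phi(G))=G/C_n=Q_n$, a finite set, so $\rho_n(E(G))=Q_n$.

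Since $\mathcal{P}_{n+1}$ refines $\mathcal{P}_n$ along the $G$-equivariant map $\nu^{n+1}_n$, the homomorphisms $\rho_n$ are compatible with the bonding maps $\pi^{n+1}_n$ and assemble into a continuous homomorphism $\psi=\varprojlim\rho_n\colon E(G)\to\varprojlim Q_n=\widehat{G}_\infty$. I would then verify $\psi$ is an isomorphism of topological groups. For injectivity, if $\psi(\widehat{g})$ is trivial then $\widehat{g}$ fixes every cell of every $\mathcal{P}_n$ setwise; writing a point $z=(g_nG_n)_{n\geq 0}$ as $\bigcap_n U_{g_n,n}$ gives $\widehat{g}(z)\in\bigcap_n U_{g_n,n}=\{z\}$, whence $\widehat{g}=\id$. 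For surjectivity, $\psi\circ\Phi$ is the canonical map $g\mapsto(gC_n)_{n\geq 0}$ of $G$ into $\widehat{G}_\infty$, whose image is dense; as $E(G)$ is compact, $\psi(E(G))$ is a closed subgroup containing this dense image, hence equals $\widehat{G}_\infty$. A continuous bijection between compact Hausdorff topological groups is an isomorphism of topological groups.

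For the isotropy statement, take $x=(eG_n)_{n\geq 0}$. Then $\widehat{g}\in E(G)$ fixes $x$ if and only if for every $n$ the permutation $\rho_n(\widehat{g})\in Q_n$ fixes the base coset $eG_n\in G/G_n$. Because $C_n\subset G_n$, the stabilizer of $eG_n$ in $Q_n=G/C_n$ acting by left translation on $G/G_n$ is precisely $G_n/C_n$. Hence $\widehat{g}\in E(G)_x$ if and only if $\psi(\widehat{g})\in\varprojlim G_n/C_n=\cD_x$, so $\psi$ restricts to an isomorphism $E(G)_x\to\cD_x$, as required.

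The main obstacle is the first step: proving that every element of $E(G)$ acts on the finite partition $\mathcal{P}_n$ through $Q_n$, and not through some larger subgroup of $\Sym(G/G_n)$. This is exactly where equicontinuity (together with the ultrametric structure of \eqref{eq-metricd}) enters, guaranteeing that a uniform limit of the $\Phi(g)$ respects the finite level-$n$ combinatorics and that no ``new'' permutations appear in the closure. Once this level-wise finite-permutation description and the continuity of the $\rho_n$ are secured, injectivity (via nested intersections), surjectivity (via density of $G$ in the profinite limit), and the stabilizer computation are all formal.
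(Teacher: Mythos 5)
The paper does not actually prove this theorem; it is quoted from \cite[Theorem~4.4]{DHL2016}, so there is no internal proof to compare against. Your argument is correct and follows essentially the same route as that reference: reduce to the odometer model $X = \varprojlim G/G_n$, show each $\widehat{g} \in E(G)$ induces a permutation of the level-$n$ partition lying in the image $Q_n = G/C_n$ of the left-translation representation (this is where equicontinuity and the ultrametric enter, via local constancy of $\rho_n$), assemble the $\rho_n$ into $\psi = \varprojlim \rho_n$, and get injectivity from shrinking cylinders, surjectivity from density of $G$ plus compactness of $E(G)$, and the identification $E(G)_x \cong \cD_x$ from the coordinatewise observation that the stabilizer of $eG_n$ in $Q_n$ is $G_n/C_n$.
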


We construct the actions in Theorems \ref{thm-main1}, \ref{thm-main2} and \ref{thm-main3} using the method of group chains, described above.

\section{Trees}\label{sec-trees}

In this section we show that every equicontinuous minimal action can be represented as an action on the boundary of a rooted spherically homogeneous tree $\tree$. 

\subsection{Spherically homogeneous trees}
A \emph{spherical index} is a sequence ${m} = (m_1,m_2,\ldots)$ of natural numbers, where $m_i \geq 2$ for $i \geq 1$. A spherical index $m$ defines a \emph{spherically homogeneous rooted tree} $\tree$ as follows. Let $V_0 = X_0$ be a singleton, and for $n \geq 1$ fix an identification $V_n \cong \prod_{i=0}^n X_i$, where $X_i$ is a finite set with $m_i$ elements. Denote by $\pr_n: \prod_{i=0}^{n+1} X_i \to \prod_{i=0}^{n} X_i$ the projection, and join $v_{n+1} \in V_{n+1}$ and $v_n \in V_n$ by an edge if and only if $\pr_n(v_{n+1}) = v_n$. Thus every vertex in $V_{n}$ is connected by edges to precisely $m_{n+1}$ vertices in $V_{n+1}$, and every vertex in $V_{n+1}$ is connected by an edge to precisely one vertex in $V_n$. 

A path in $\tree$ is an infinite sequence of vertices $(v_n)_{n \geq 0} = (v_0,v_1,v_2,\cdots)$ such that $v_{n}$ and $v_{n+1}$ are connected by an edge, for $n \geq 0$. We always assume that $v_n \in V_n$ for $n \geq 0$, so our paths do not backtrack and have no self-intersections. Denote by $\widehat{X}$ the set of all such sequences. Using the identifications $V_n \cong \prod_{i=0}^n X_i$, one can show that $\widehat X \cong \prod_{i \geq 0} X_i$, and so it is a Cantor set in the product (Tychonoff) topology. For a vertex $v\in \tree$, let $\tree_v\subset \widehat X$ be the set of infinite paths that pass through $v$. Then the family $\{\tree_v, v\in\tree\}$ is a basis of clopen subsets of $\widehat X$. The Cantor space $\widehat{X}$ is called the \emph{boundary} of the tree $\tree$. Using a slightly different formalism, the boundary of $\tree$ can also be seen as the inverse (projective) limit space
\begin{align}\label{eq-treeinvlim} \widehat X \cong \lim_{\longleftarrow}\{{\rm pr}_n: V_{n+1} \to V_n \mid n\geq 0 \} = \{(v_n)_{n\geq 0} \in \prod_{n \geq 0} V_n \mid {\rm pr}_n(v_{n+1}) = v_n\}, \end{align}
which is sometimes useful in arguments, see \cite[Section 1]{RZ} for the basics about the inverse limits.

A root-preserving automorphism of $\tree$ is a map $a: \tree \to \tree$ which maps vertices to vertices and edges to edges in such a way that the root $v_0$ is fixed and infinite paths in $\tree$ are mapped to infinite paths. The restriction of such a map to each vertex level $V_n$ is a permutation, and it follows that $a$ induces a homeomorphism $\phi_a: \widehat{X} \to \widehat{X}$ of the boundary. The group of all root-preserving automorphisms $\Aut(\tree)$ can be expressed as the infinite wreath product of the symmetric groups, see Section \ref{subsec-wreathproduct} for details.

\subsection{Equicontinuous minimal actions on the boundary of a tree} Given a group chain $\{G_n\}_{n \geq 0}$ as in Section \ref{sec-chains}, we can associate to it a spherically homogeneous tree and equip the tree with an action by root-preserving automorphisms as follows.

Let $X_0$ be a singleton, and for $n \geq 1$, let $X_n$ be a set of cardinality $|G_{n-1}/G_n|$. Then $V_n = \prod_{i=0}^n X_i$ has the same cardinality as $G/G_n$, for $n \geq 0$. For $n \geq 0$, fix bijections $\lambda_n: V_n \to G/G_n$ in such a way that they respect the identifications $V_n = \prod_{i=0}^n X_i$, that is, for any $w, v \in V_{n+1}$ we have $\pr_{n}(v) = \pr_{n}(w)$ if and only if the cosets $\lambda_{n+1}(v)$ and $\lambda_{n+1}(w)$ in $G/G_{n+1}$ are in the same coset of $G/G_{n}$. Then the inverse limit map ${\displaystyle \lambda_\infty = \lim_{\longleftarrow} \lambda_n: \widehat X \to X}$ is a homeomorphism of Cantor sets, where $X$ is the inverse limit space \eqref{eq-invlimx}.  Since $G$ acts on each $G/G_n$ by permuting cosets, the pullback of this action to $V_n$ permutes the vertices in $V_n$ in such a way that for each $g \in G$ the vertices $v,w \in V_{n+1}$ are connected to the same vertex $z \in V_{n}$ if and only if $g \cdot v$ and $g \cdot w$ are connected to the same vertex in $V_{n}$, for $n \geq 0$. It follows that the induced action of $G$ on $\tree$ is by root-preserving automorphisms. 

This construction together with Section \ref{sec-chains} shows that every minimal equicontinuous action $(\X,G,\Phi)$ on a Cantor set $\X$ admits a representation as an action by root-preserving automorphisms on the boundary $\widehat X$ of an appropriately chosen spherically homogeneous tree $\tree$.

\section{Wild actions from product groups}\label{ss:productactions}

Our strategy in this section is to define an action of a profinite group first, and then to find a countable dense subgroup which generates the profinite group. This subgroup with discrete topology is the acting countable group. The profinite group in this section is a product of finite groups.

\subsection{Product groups}\label{subsec-product}
For $i \geq 1$, let $X_i$ be a finite set with $|X_i| \geq 2$. Let $F_i \subset \Sym(|X_i|)$ be a finite group, then the induced action of $F_i$ on $X_i$ is effective.  The infinite product $\widehat{G} = \prod_{i \geq 1} F_i$ is a profinite group, its elements are sequences $(g_i)_{i \geq 1} = (g_1,g_2,\ldots)$. The product space $\widehat{X} = \prod_{i\geq 1} X_i$ with Tychonoff topology is a Cantor set, where elements are sequences $\widehat{x} = (x_i)_{i\geq 1} = (x_1,x_2,\ldots)$. The actions of distinct factors in $\prod_{i \geq 1} F_i$ on factors of $\prod_{i \geq 1} X_i$ commute, and we define the action of $\widehat{g} \in \widehat{G}$ on the space of sequences $\widehat{X}$ by 
\begin{equation}\label{productaction}
\widehat g \cdot \widehat x=(g_1,g_2,\ldots) \cdot (x_1, x_2,  \ldots )=(g_1 \cdot x_1, g_2 \cdot x_2, \ldots).
\end{equation}
Then the restriction of the action of $\widehat{G}$ to the finite product $\prod_{i=1}^n X_i$ is the action of the finite group $\prod_{i=1}^n F_i$. 


Recall that an action of a group $G$ on a set $Y$ is \emph{transitive} if for any $y_1,y_2 \in Y$ there is $g \in G$ such that $g \cdot y_1 = y_2$. It is straightforward that if $G_1$ and $G_2$ are groups acting transitively by homeomorphisms on the sets $Y_1$ and $Y_2$ respectively, then the product action of $G_1 \times G_2$ on $Y_1 \times Y_2$ is transitive.

We will use the following result from \cite{Miller1928}. As in the Introduction, we denote by $\Alt (n)$ the alternating group on $n$ symbols. For $\sigma \in \Alt(n)$, we denote by $\ell(\sigma)$ the order of $\sigma$.

\begin{thm}\label{t:primes}
	Let $\ell_1, \ell_2 \geq 3$ be odd integers, and let $n\in \mathbb{N}$ satisfy $\ell_1,\ell_2\leq n$ and $n< \ell_1+\ell_2$. Then there are cyclic permutations $\sigma_1, \sigma_2\in \Alt (n)$ such that $\ell(\sigma_1) = \ell_1$ and $ \ell(\sigma_2) = \ell_2$, and $\sigma_1$ and $\sigma_2$ generate $\Alt(n)$, that is, $\Alt(n)=\langle \sigma_1,\sigma_2\rangle$.
\end{thm}

\subsection{Dynamically wild action of finite type} In this section we prove Theorem \ref{thm-main1} by constructing a finitely generated  dense subgroup $H$ of the infinite product $\prod_{n \geq 1} \Alt(d_n)$, with $d_n \geq 5$ and $d_n \to_{n \to \infty} \infty$, and an action of $H$ on a Cantor set $X$ which is wild of finite type and algebraically stable with trivial centralizer direct limit group. 


Choose an injective map $p\colon \{1,2\}\times\N\to \N$ taking values on  odd primes, and let $o\colon\N\to\N$ be a map satisfying
\begin{equation}\label{po}
p(1,n),p(2,n)\leq o(n)\quad \text{and}\quad o(n)<p(1,n)+p(2,n)
\end{equation} 
for all $n\in\N$. For example, one can take $o(n) = p(1,n)+p(2,n)-1$, for $n \geq 1$.

Let $X_n$ be a finite set of cardinality $o(n)$, and let $A_n := \Alt(o(n))$ be the alternating group on $o(n)$ symbols. Then $A_n$ acts transitively on $X_n$ by even permutations. 

By Theorem~\ref{t:primes}, there exist two sequences $\{\sigma_{1,n}\}_{n \geq 1}, \{ \sigma_{2,n}\}_{n \geq 1}$, $n\in \N$, of permutations of $X_n$ such that
\[
 \langle \sigma_{1,n}, \sigma_{2,n}\rangle = A_n \quad \text{and} \quad \ell(\sigma_{i,n})=p(i,n), \quad \text{for} \quad i=1,2.
\]
For $i=1,2$, define $\sigma_i=(\sigma_{i,1},\sigma_{i,2},\ldots)\in \prod_{n \geq 1} A_n$ be two elements of the infinite product group, and let $H = \langle \sigma_1,\sigma_2 \rangle $ denote the subgroup of the product group that they generate. Then $H$ acts on the Cantor set $\widehat{X}= \prod_{n \geq 1} X_n$ by~\eqref{productaction}. 

Let $X_0$ be a singleton. We identify each finite product $\prod_{i=1}^n X_i$ with the vertex set $V_n = \prod_{i \geq 0}^n X_i$ of a spherically homogeneous tree $\tree$ with spherical index $m = (o(1), o(2),\ldots)$, see Section \ref{sec-trees} for details about trees. Under this identification, the product space $\widehat{X} = \prod_{i\geq 1} X_i$ is identified with the boundary $\prod_{i\geq 0} X_i$ of $\tree$. We denote the boundary also by $\widehat X$. Let $A_0 = \{e_0\}$ be the trivial group, then the action of $\prod_{i \geq 0} A_i$ on the boundary $\prod_{i\geq 0} X_i$ is conjugate to the action of $\prod_{i \geq 1} A_i$ on $\widehat X = \prod_{i\geq 1} X_i$, and there is an induced action on $\tree$ of a subgroup isomorphic to $H$, which we denote also by $H$. This subgroup is generated by the elements $(e_0,\sigma_{i,1},\sigma_{i,2},\ldots)$ for $i=1,2$. Again, we use the same notation $\sigma_i$ for $(\sigma_{i,1},\sigma_{i,2},\ldots)$ and $(e_0,\sigma_{i,1},\sigma_{i,2},\ldots)$ acting on $\prod_{i \geq 1} X_i$ and $\prod_{i \geq 0} X_i$ respectively, for $i =1,2$.

The group of all permutations of $V_n$ is isomorphic to the symmetric group $\Sym(|V_n|)$. For each $\widehat{h} \in H$, denote by $\sigma_{\widehat{h},n}$ the permutation of $V_n$ induced by the action of $\widehat{h}$. Let 
  $$H^{[n]} = \{\sigma_{\widehat{h},n} \in \Sym(|V_n|) \mid \widehat{h} \in H\}.$$ 
For $n \geq 1$, consider the restricted action $(V_n,H^{[n]})$ of $H$ on the finite set $V_n$.

\begin{lem}\label{lem-finiteproduct}
	For $n\in\N$, we have $H^{[n]} = A_0 \times A_1\times \cdots \times A_n \cong  A_1\times \cdots \times A_n$.
\end{lem}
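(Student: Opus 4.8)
The plan is to reinterpret $H^{[n]}$ as the image of $H$ under the projection $\pi_n\colon \prod_{i\geq 1} A_i \to \prod_{i=1}^n A_i$ onto the first $n$ factors. Since $A_0 = \{e_0\}$ is trivial, the factor $A_0$ contributes nothing, so the content of the lemma is the statement that $\pi_n(H) = A_1\times\cdots\times A_n$, i.e. that $H$ is a \emph{full} subdirect product. First I would record that each $A_i = \Alt(o(i))$ is simple and non-abelian: the constraints \eqref{po} together with the injectivity of $p$ (which forces $p(1,i)$ and $p(2,i)$ to be distinct odd primes) give $o(i)\geq 5$. I would then prove surjectivity by induction on $n$, the base case $n=1$ being exactly the relation $\langle \sigma_{1,1},\sigma_{2,1}\rangle = A_1$ built into the construction.

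For the inductive step, assume $\pi_n(H) = A_1\times\cdots\times A_n =: B$. The image $P := \pi_{n+1}(H)\leq B\times A_{n+1}$ surjects onto $B$ (compose with the projection forgetting the last factor and use the hypothesis) and onto $A_{n+1}$ (because $\langle \sigma_{1,n+1},\sigma_{2,n+1}\rangle = A_{n+1}$). By Goursat's lemma, since $A_{n+1}$ is simple, either $\{e\}\times A_{n+1}\subseteq P$ — in which case surjectivity onto $B$ forces $P = B\times A_{n+1}$ and we are done — or else $P$ is the graph of a surjective homomorphism $\rho\colon B\to A_{n+1}$. The whole point of the construction is to exclude this second alternative.

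To do so, I would analyze $\rho$ factor by factor. Each $A_i$, $1\leq i\leq n$, is normal in $B$, so $\rho(A_i)$ is a normal subgroup of the simple group $A_{n+1}$, hence trivial or everything; and since distinct factors commute, at most one of them can map onto the non-abelian group $A_{n+1}$. As $\rho$ is surjective, exactly one factor $A_j$ (with $j\leq n$) maps onto $A_{n+1}$, $\rho$ kills the others, and $\rho|_{A_j}\colon A_j\to A_{n+1}$ is an isomorphism by simplicity of $A_j$. Because $P$ is the graph of $\rho$, applying this to the generators yields $\sigma_{i,n+1} = \rho|_{A_j}(\sigma_{i,j})$ for $i=1,2$. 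An isomorphism preserves orders, so $p(i,n+1) = \ell(\sigma_{i,n+1}) = \ell(\sigma_{i,j}) = p(i,j)$; but $p$ is injective, whence $j = n+1$, contradicting $j\leq n$. This contradiction rules out the graph case and closes the induction. The main obstacle — and the reason the orders $p(i,n)$ are chosen to be distinct primes — is precisely this last step: the injectivity of $p$ is exactly what prevents a diagonal Goursat identification between the factor $A_{n+1}$ and any earlier factor, which is what would otherwise shrink the subdirect product below the full direct product.
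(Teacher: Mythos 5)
Your proof is correct, but it takes a genuinely different route from the paper's. The paper argues directly, with no induction and no Goursat's lemma: since the factors commute and $\langle \sigma_{1,i},\sigma_{2,i}\rangle = A_i$, every element of $A_0\times\cdots\times A_n$ is a product of one-coordinate elements $(e_0,\ldots,\sigma_{a,k},\ldots,e_n)$, and each such element is exhibited explicitly as the restriction to $V_n$ of a power $\sigma_a^{s(a,k)}$ of a generator of $H$, where $s(a,k)$ is supplied by the Chinese Remainder Theorem applied to the congruences $s\equiv 1 \pmod{p(a,k)}$ and $s\equiv 0 \pmod{p(a,i)}$ for $i\neq k$, $1\leq i\leq n$. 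That argument uses the primality (really, the pairwise coprimality) of the orders $p(a,i)$, but never the simplicity of the groups $A_i$; it would work verbatim for arbitrary finite groups $A_i$ generated by two elements whose orders are distinct primes across all factors. Your Goursat induction trades these ingredients the other way: you need each $A_i$ to be simple and non-abelian (which indeed holds, since $o(i)\geq 5$), but you only need the orders $p(i,n)$ to be pairwise distinct, not prime, since injectivity of $p$ alone rules out the graph case. So your argument is less elementary but more general in one direction (it covers families of non-abelian simple groups whose two generators merely have pairwise distinct orders), while the paper's is constructive --- it writes down explicit powers of the generators realizing each coordinate permutation --- and more general in another (no simplicity required). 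Both proofs hinge on the same crucial hypothesis, the injectivity of $p$: in the paper it furnishes coprime moduli for the CRT, in yours it forbids a diagonal identification of $A_{n+1}$ with an earlier factor.
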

\begin{proof}
By construction, $H^{[n]} \subseteq A_0 \times A_1 \times \cdots \times A_n$, so we have to show the converse implication.

Since $\sigma_{1,i}$ and $\sigma_{2,i}$ generate $A_i$ for $1\leq i\leq n$, and the actions of $A_i$ on the corresponding factors of $\prod_{i=0}^n X_i$ commute, every element $(e_0,g_1,\ldots,g_n)\in A_0 \times A_1 \times \cdots \times A_n$ can be expressed as a composition of elements of the form $(e_0,e_1,\ldots,\sigma_{a,k} ,\ldots, e_n)$, where $a\in\{1,2\}, 1\leq k\leq n$ and $e_i$ is the identity permutation in $\Sym(|X_i|)$, $1 \leq i \leq n$. We now show that any such element is contained in $H^{[n]}$, then the statement of the lemma follows.

Since all primes $p(i,j)$ are distinct, we can apply the Chinese Remainder Theorem in the following way: for a given $(a,k)$, $a \in \{1,2\}$ and $1 \leq k \leq n$, choose  $s(a,k)\in \N$ such that 
  $$s(a,k)\equiv1\mod p(a,k)$$ 
and for $i \ne k$, $1 \leq i \leq n$,
  $$s(a,k)\equiv0\mod p(a,i).$$ 
Then, computing the $s(a,k)$-th power of $\sigma_a$ we obtain
  $$\sigma_a^{s(a,k)}=(e_0,e_1,\ldots, \sigma_{a,k},\ldots, e_n, g_{n+1},\ldots),$$ 
which shows that $(e_0,e_1,\ldots, \sigma_{a,k},\ldots, e_n) \in H^{[n]}$.
\end{proof}

\begin{lem}\label{lem-min}
	The action $(\widehat{X},H)$ on the boundary $\widehat X$ of the tree $\tree$ is minimal.
\end{lem}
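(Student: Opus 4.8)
Lemma~\ref{lem-min} asserts that the action $(\widehat X, H)$ on the boundary of the tree $\tree$ is minimal. The plan is to use the standard criterion that an action of a group on the boundary of a spherically homogeneous tree is minimal if and only if the induced action on each vertex level $V_n$ is transitive; this is precisely the content of the discussion in Section~\ref{sec-chains}, where transitivity on every coset space $G/G_n$ yields minimality of the action on the inverse limit. Equivalently, I want to show that for every $n \geq 0$ the restricted action $(V_n, H^{[n]})$ is transitive on $V_n$.

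**Main step.** The key input is Lemma~\ref{lem-finiteproduct}, which identifies $H^{[n]}$ with the full product $A_0 \times A_1 \times \cdots \times A_n \cong A_1 \times \cdots \times A_n$. First I would recall that each $A_i = \Alt(o(i))$ acts transitively on $X_i$ (it acts by even permutations and, since $o(i) \geq 3$, the alternating group is transitive on $X_i$). By the remark just before Theorem~\ref{t:primes}, the product of finitely many transitively acting groups acts transitively on the product of the sets; hence $A_0 \times A_1 \times \cdots \times A_n$ acts transitively on $V_n = \prod_{i=0}^n X_i$. Combining this with the identification in Lemma~\ref{lem-finiteproduct}, the action $(V_n, H^{[n]})$ is transitive for every $n$.

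**Conclusion.** Finally I would pass from level-wise transitivity to minimality. Given any two boundary points $\widehat x, \widehat y \in \widehat X$ and any $\e > 0$, choose $n$ large enough that the basic clopen set $\tree_{v}$ determined by the first $n$ coordinates of $\widehat y$ has diameter less than $\e$ in the metric of Section~\ref{sec-chains}. By transitivity at level $n$, there is $\widehat h \in H$ with $\sigma_{\widehat h, n}(\widehat x|_{V_n}) = \widehat y|_{V_n}$, so that $\widehat h \cdot \widehat x$ and $\widehat y$ agree up to level $n$ and thus lie within distance $\e$. Since $\e$ was arbitrary, the orbit of $\widehat x$ is dense in $\widehat X$, which is exactly minimality.

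**Expected obstacle.** The content of the lemma is essentially already packaged in Lemma~\ref{lem-finiteproduct}, so I expect no genuine difficulty; the only point requiring a moment of care is confirming that each $A_i = \Alt(o(i))$ really is transitive on $X_i$, which holds because $o(i) = p(1,i) + p(2,i) - 1 \geq 3+3-1 = 5$ and $\Alt(k)$ is transitive for $k \geq 3$. Everything else is the routine ``transitivity on all levels implies minimality'' argument inherited from Section~\ref{sec-chains}.
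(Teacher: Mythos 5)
Your proposal is correct and follows essentially the same route as the paper: reduce minimality to transitivity of $H^{[n]}$ on each vertex level $V_n$, invoke Lemma~\ref{lem-finiteproduct} to identify $H^{[n]}$ with $A_0 \times A_1 \times \cdots \times A_n$, and use that a product of transitive actions is transitive. The only cosmetic difference is that you spell out the ``level-wise transitivity implies dense orbits'' step and the bound $o(i) \geq 5$ (which holds for any admissible $o$, not just the sample choice $o(n) = p(1,n)+p(2,n)-1$, since $o(n) \geq \max\{p(1,n),p(2,n)\}$), both of which the paper leaves implicit.
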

\begin{proof}
	It is enough to prove that $H^{[n]}$ acts transitively on each finite vertex set $V_n = \prod_{i=0}^n X_{i}$. By Lemma \ref{lem-finiteproduct} we have $H^{[n]} = A_0 \times A_1\times \cdots \times A_n$. Since $A_i$ acts transitively on each $X_i$, the product $A_0 \times A_1 \times \cdots \times A_{n}$ acts transitively on $V_n$.
\end{proof}

\begin{lem}\label{lem-closure}
The group $H$ is dense in $\prod_{n \geq 0} A_n$.
\end{lem}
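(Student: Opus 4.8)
The plan is to obtain density as a formal consequence of Lemma~\ref{lem-finiteproduct}, using only the description of the product topology in terms of finitely many coordinates. First I would recall that a neighborhood basis at a point $(g_n)_{n \geq 0} \in \prod_{n \geq 0} A_n$ is given by the sets
\[
W_N = \Bigl\{(h_n)_{n \geq 0} \in \prod_{n \geq 0} A_n \mid h_i = g_i \text{ for } 0 \leq i \leq N \Bigr\}, \qquad N \geq 0,
\]
so that proving density amounts to showing that for every $(g_n)_{n \geq 0}$ and every $N \geq 0$ there is some $\widehat{h} \in H$ lying in $W_N$, that is, agreeing with the target in the first $N+1$ coordinates.

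Next I would introduce the projection $\pi_N \colon \prod_{n \geq 0} A_n \to \prod_{n=0}^N A_n$ onto the first $N+1$ factors and identify $\pi_N(H)$ with the permutation group $H^{[N]}$ of $V_N = \prod_{i=0}^N X_i$. This identification is immediate from the definition~\eqref{productaction} of the product action: the action of $\widehat{h} = (h_n)_{n \geq 0} \in H$ on $V_N$ depends only on the coordinates $h_0, \ldots, h_N$ and coincides with the action of $\pi_N(\widehat{h})$. Lemma~\ref{lem-finiteproduct} then states exactly that $H^{[N]} = A_0 \times A_1 \times \cdots \times A_N$, which is to say that $\pi_N$ restricts to a surjection of $H$ onto $\prod_{n=0}^N A_n$. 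Given any target $(g_n)_{n \geq 0}$ and any level $N$, this surjectivity produces $\widehat{h} \in H$ with $\pi_N(\widehat{h}) = (g_0, \ldots, g_N)$, so that $\widehat{h} \in W_N$, and density follows.

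Since the substantive content, namely the surjectivity of the finite-level projections, is precisely Lemma~\ref{lem-finiteproduct}, there is no real obstacle here; the only step requiring any attention is the clean identification of the abstract projection $\pi_N(H)$ with the concretely defined group $H^{[N]}$, which the product structure of the action makes transparent.
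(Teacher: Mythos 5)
Your proof is correct and takes essentially the same approach as the paper: both arguments reduce density to the surjectivity of the projections of $H$ onto the finite subproducts $\prod_{n=0}^N A_n$, which is exactly the content of Lemma~\ref{lem-finiteproduct}. The only difference is that the paper concludes by citing \cite[Lemma 1.1.7]{RZ}, whereas you verify the cylinder-set density criterion for the product topology directly; this is a cosmetic rather than substantive difference.
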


\begin{proof}
Consider the inclusion $\iota: H \to \prod_{i \geq 0} A_i$, and the projection maps $\phi_n: \prod_{i \geq 0} A_i \to \prod_{i =0}^n A_i$. By Lemma \ref{lem-finiteproduct} the compositions $\phi_n \circ \iota$ are surjective, then by  \cite[Lemma 1.1.7]{RZ} $H$ is dense in $\prod_{i \geq 0} A_i$.
\end{proof}

We will need the following technical lemma for the isotropy group of a point $x \in X_n$ for the action of $A_n$.

\begin{lem}\label{lem-noncommute}
	Let $n\geq 1$,  let $x\in X_n$, and let $g\in A_n$ be such that $g\neq e_n$ and $g(x)=x$. Then there is some $\tau\in A_n$ such that $\tau(x)=x$ and $g\tau\neq \tau g$.
\end{lem}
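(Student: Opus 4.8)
The plan is to recast the conclusion as a statement about the center of the point stabilizer. Write $S=\{h\in A_n\mid h(x)=x\}$ for the stabilizer of $x$ in $A_n=\Alt(o(n))$. The hypotheses say exactly that $g\in S$ and $g\neq e_n$, while the sought element $\tau$ is precisely a member of $S$ that does not commute with $g$; equivalently, the lemma asserts that the nontrivial element $g$ does not lie in the center $Z(S)$. Hence it suffices to prove that $S$ has trivial center, for then no nontrivial element of $S$ is central and a suitable $\tau$ must exist.

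To identify $S$, note that a permutation of $X_n$ fixing $x$ is determined by its restriction to $X_n\setminus\{x\}$, and it is an even permutation of $X_n$ if and only if this restriction is even. Therefore $S$ is isomorphic to $\Alt(o(n)-1)$ acting on the $o(n)-1$ points of $X_n\setminus\{x\}$. I would then record the degree bound that makes this alternating group centerless: since $p(1,n)$ and $p(2,n)$ are distinct odd primes and $o(n)\geq\max\{p(1,n),p(2,n)\}$ by~\eqref{po}, the two smallest possibilities are $3$ and $5$, so $o(n)\geq 5$ and hence $o(n)-1\geq 4$.

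The final ingredient is the elementary fact that $\Alt(m)$ has trivial center whenever $m\geq 4$: for $m\geq 5$ this is immediate from simplicity and non-abelianness, and the remaining case $m=4$ is a short check on conjugacy classes. Applying this to $S\cong\Alt(o(n)-1)$ with $o(n)-1\geq 4$ yields $Z(S)=\{e_n\}$, and since $g\in S$ is nontrivial it is non-central, which furnishes the required $\tau\in S$ with $g\tau\neq\tau g$.

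I do not anticipate a genuine obstacle; the only points demanding care are the identification $S\cong\Alt(o(n)-1)$ and the degree bound $o(n)\geq 5$, which is exactly what keeps $S$ away from the abelian groups $\Alt(2)$ and $\Alt(3)$. If one prefers an explicit witness rather than invoking centerlessness, the same bound lets one construct $\tau$ directly: since $g\neq e_n$ fixes $x$ it moves some $a\neq x$ to $b:=g(a)\neq a$, and because $|X_n\setminus\{x,b\}|=o(n)-2\geq 3$ one may pick a $3$-cycle $\tau$ supported in $X_n\setminus\{x,b\}$ with $\tau(a)\neq a$. Such a $\tau$ fixes $x$ and lies in $A_n$, and it satisfies $g(\tau(a))\neq b$ (as $\tau(a)\neq a$ and $g$ is injective with $g(a)=b$) while $\tau(g(a))=\tau(b)=b$, whence $g\tau\neq\tau g$.
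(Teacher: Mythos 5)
Your proof is correct, but your primary route differs from the paper's. The paper argues by direct construction, in the same spirit as the alternative you sketch at the end: it picks $y$ with $g(y)\neq y$, chooses $u\in X_n\setminus\{x,y,g(y),g^{-1}(y)\}$ (using $o(n)\geq 5$), and takes the explicit $3$-cycle $\tau=(y \ g^{-1}(y)\ u)$, checking non-commutation at the single point $y$. Your main argument is instead structural: you identify the stabilizer $S$ of $x$ with $\Alt(o(n)-1)$, note $o(n)-1\geq 4$, and invoke the fact that alternating groups of degree at least $4$ are centerless, so the nontrivial $g\in S$ cannot commute with all of $S$. What the paper's approach buys is a short, self-contained argument producing an explicit witness; what yours buys is a cleaner conceptual statement (``the point stabilizer has trivial center'') that isolates exactly where the hypothesis $o(n)\geq 5$ enters and would generalize to any transitive action with centerless point stabilizers. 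As a minor point in favor of your explicit variant: the paper's verification asserts $\tau\circ g(y)=g(y)$, which tacitly assumes $g(y)\neq g^{-1}(y)$; in the edge case $g^2(y)=y$ one instead gets $\tau\circ g(y)=u$, and the conclusion still holds since $u \neq y = g \circ \tau(y)$, but your construction avoids the issue altogether by keeping $b=g(a)$ outside the support of $\tau$.
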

\begin{proof}
	Since $g$ is not the identity transformation, let  $y\in X_n$ be such that $y\neq g(y)$. The group
	$A_n$ is the alternating group on $o(n)\geq p(1,n),p(2,n)$ symbols, where $p(1,n)$ and $p(2,n)$ are distinct odd primes, so $o(n)\geq 5$ and $X_n$ contains at least $5$ elements. 
	Let $u\in X_n\setminus \{x,y,g(y),g^{-1}(y)\}$. The cycle $\tau = (y \, g^{-1}(y) \, u)$ is an even permutation of $X_n$, which fixes $x$. Since $g \circ \tau(y) = y$ and $\tau \circ g(y) = g(y)$, the lemma is proved.
\end{proof}

Choose a point $\widehat x \in \widehat X$ in the boundary of the tree $\tree$, and consider the action $(\widehat X,H)$. 

\begin{prop}\label{prop-finite} Let $\widehat x \in \widehat X$, and let $K_n$ and $Z_n$ be the subgroups of the closure $\prod_{i \geq 0}A_i$ defined by \eqref{eq-Kchain} and \eqref{eq-Zchain} respectively, for $n \geq 0$. Then for every $n \geq 0$
	the group $K_n$ is finite with $|K_n| \to_{n \to \infty} \infty$. For every $n \geq 0$ the group $Z_n$ is trivial.
\end{prop}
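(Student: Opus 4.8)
The plan is to exploit the explicit product structure of the Ellis group and compute both chains coordinatewise. By Lemma~\ref{lem-closure}, $H$ is dense in $\prod_{i\geq 0}A_i$, so $E(H)=\prod_{i\geq 0}A_i$, and the natural fundamental system of identity neighborhoods is $\widehat V_\ell=\{(g_i)\mid g_i=e_i\text{ for }i\leq \ell\}=\prod_{i>\ell}A_i$, each a normal subgroup of finite index (and $\widehat V_0=E(H)$ since $A_0$ is trivial). Writing $\widehat x=(x_i)_{i\geq 0}$ and $B_i=(A_i)_{x_i}$ for the point-stabilizer, the isotropy group is $E(H)_{\widehat x}=\prod_{i\geq 0}B_i$, so that
\[
\widehat U_\ell=E(H)_{\widehat x}\,\widehat V_\ell=\prod_{i=0}^{\ell}B_i\times\prod_{i>\ell}A_i .
\]
Since each $A_i$ acts transitively on $X_i$, the associated clopen set $U_\ell=\widehat U_\ell\cdot\widehat x$ is the cylinder $\{x_0\}\times\cdots\times\{x_\ell\}\times\prod_{i>\ell}X_i$.

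First I would compute $K_n$ from its description as the elements of $E(H)_{\widehat x}$ acting as the identity on $U_n$ (equivalently, $\widehat h\,\widehat g\,E(H)_{\widehat x}=\widehat g\,E(H)_{\widehat x}$ for all $\widehat g\in\widehat U_n$ means $\widehat h$ fixes every point $\widehat g\cdot\widehat x$ of $U_n$). An element $(h_i)\in E(H)_{\widehat x}$ fixes every point of $U_n$ exactly when $h_i(y_i)=y_i$ for all $y_i\in X_i$ and all $i>n$; every such $y_i$ occurs because $A_i$ is transitive on $X_i$, and effectiveness of the $A_i$-action then forces $h_i=e_i$ for $i>n$, while the coordinates $h_i\in B_i$ with $i\leq n$ are unconstrained. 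Hence
\[
K_n=\prod_{i=0}^{n}B_i ,
\]
a finite group. As $A_0$ is trivial and $|B_i|=|A_i|/|X_i|=(o(i)-1)!/2$, and as $o(i)\to\infty$ (the primes $p(1,i)$ are distinct, hence unbounded, and $o(i)\geq p(1,i)$), the orders $|K_n|$ tend to infinity.

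For the centralizer chain, $Z_n$ consists of the $\widehat h\in E(H)_{\widehat x}$ centralizing $\widehat U_n$, which in a direct product is a coordinatewise condition: $(h_i)\in Z_n$ iff $h_i$ commutes with the entire $i$-th factor of $\widehat U_n$. For $i>n$ that factor is all of $A_i=\Alt(o(i))$, and since $o(i)\geq 5$ the center of $A_i$ is trivial, forcing $h_i=e_i$. For $i\leq n$ the factor is $B_i$, and since $h_i\in B_i$ this says $h_i\in Z(B_i)$; but Lemma~\ref{lem-noncommute} asserts precisely that every non-identity element of the point-stabilizer $B_i=(A_i)_{x_i}$ fails to commute with some element of $B_i$, i.e.\ $Z(B_i)$ is trivial (here $o(i)-1\geq 4$). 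Thus $h_i=e_i$ for all $i$, so $Z_n$ is trivial for every $n$, and passing to the direct limit gives trivial $\U_c^{\widehat x}(\Phi)$.

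The only genuinely delicate point is the bookkeeping translating the abstract definitions \eqref{eq-Kchain} and \eqref{eq-Zchain}, phrased via cosets of $E(H)_{\widehat x}$, into the coordinatewise conditions above: one must verify that $U_n$ is exactly the cylinder and that centralizing $\widehat U_n$ decouples across the factors. Once the product structure is in hand, everything reduces to the two facts that $\Alt(k)$ has trivial center for $k\geq 4$ (tail coordinates, $k=o(i)\geq 5$) and that point-stabilizers in $\Alt(o(i))$ have trivial center (head coordinates, supplied by Lemma~\ref{lem-noncommute}), so no serious further obstacle remains.
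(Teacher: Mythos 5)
Your proof is correct and takes essentially the same approach as the paper: identify $E(H)_{\widehat x}=\prod_{i}(A_i)_{x_i}$, compute $K_n$ as the product of the point stabilizers in coordinates $i\leq n$ (with identity in the tail), and rule out nontrivial elements of $Z_n$ using Lemma~\ref{lem-noncommute}. The only cosmetic difference is that you treat the tail coordinates of $Z_n$ directly via triviality of the center of $\Alt(o(i))$, whereas the paper first invokes the inclusion $Z_n\subset K_n$ so that only the head coordinates require Lemma~\ref{lem-noncommute}.
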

\begin{proof}
Let $\widehat x = (x_n)_{n\geq 0}$, where $x_n \in X_n$. Denote by $D_n = \{ h \in A_n \mid h \cdot x_n = x_n \}$ the isotropy subgroup of the action of $A_n$ on $X_n$. Note that $D_0 = \{e_0\}$ is trivial, since $A_0$ is trivial.

The closure of the action of $H$ is $\prod_{i \geq 0}A_i$. Let $\cD_{\widehat x}$ be the isotropy group of the action of $H$ at $\widehat x$. Since the actions of the factors in $\prod_{i \geq 0}A_i$ on the factors of $\widehat{X}$ commute, we have $\cD_{\widehat x} = \prod_{i \geq 0} D_i$. We now compute the group chains representing $\U_s^{\widehat x}$ and $\U_c^{\widehat x}$.

Define a decreasing sequence of clopen neighborhoods of $\widehat{x}$ by
  \begin{align}\label{eq-nbhdzero1}U_n = \{ \widehat{w} = (w_0,w_1,w_2,\ldots) \in \widehat{X} \mid w_i=x_i \, \textrm{ for } \, 0 \leq i \leq n\}. \end{align}
Suppose $\widehat{g} = (g_i)_{i \geq 0} \in \cD_{\widehat x}$, then $(g_i)_{i =0}^n \in \prod_{i=0}^n D_i$. Because of the product structure of $\cD_{\widehat x}$, if for some $m > n$ we have $g_m \ne e_m$, then $\widehat{g}$ acts non-trivially on a clopen subset of $U_n$, and so $\widehat{g} \notin K_n$. It follows that
\[
K_n=\prod_{0\leq i\leq n}D_i\times \prod_{i>n}\{e_i\}\;
\]
is isomorphic to a finite product of finite groups, and so $K_n$ is finite for each $n \geq 1$. Since $|X_i| = o(i) \geq 5$ for $i \geq 1$, and $A_i$ contains all $3$-cycles, for $i \geq 1$ every $D_i$ contains non-trivial elements, which fix $x_i$ and act non-trivially on the complement of $x_i$ in $X_i$. It follows that $|K_n| \to \infty$ as $n \to \infty$. 

Let us show that the centralizer group $Z_n$ is trivial.  Note that the subgroup of elements in $E(G)$ which preserve $U_n$ is given by $\widehat{U}_n = \prod_{0 \leq i \leq n}D_i \times \prod_{i > n} A_i$. Suppose that there is a non-trivial $\widehat{h}\in K_n$, then 
\[
\widehat{h}=(e_0,h_1,\ldots,h_n,e_{n+1},\ldots)\in \prod_{0\leq i\leq n}D_i\times \prod_{i>n}\{e_i\}\;,
\]
where  $h_i$ is not the identity for at least one index $i$, $1\leq i\leq n$. Choose one such $i$, then by Lemma \ref{lem-noncommute} we can find $\tau_i \in D_i$ such that $\tau_i \, h_i \ne h_i \, \tau_i$.  Then $\widehat\tau:=(e_0,e_1,\ldots, e_{i-1},\tau_i,e_{i+1},\ldots)\in K_n \subset \widehat{U}_n$ satisfies 
$\widehat\tau \, \widehat h \neq \widehat h \, \widehat\tau$. This implies that the adjoint action $Ad(\widehat{h})$ is non-trivial on $\widehat U_n$, and $\widehat{h}$ is not in $Z_n$. Since this holds for any $\widehat h \in K_n$, it follows that $Z_n$ is a trivial group.
\end{proof}

\begin{proof}[Proof of Theorem \ref{thm-main1}] The action $(\widehat{X},H)$ defined above is equicontinuous since it is an action on the boundary of a rooted spherically homogeneous tree, and it is minimal by Lemma \ref{lem-min}.

By Proposition \ref{prop-finite} we have that for $n \geq 0$ the group $K_n$ is finite, and $|K_n| \to \infty$ as $n \to \infty$. Therefore, the stabilizer group chain $\U_s^{\widehat x}$ is unbounded and the action 
$(\widehat{X},H)$ is wild of finite type. On the other hand, $Z_n$ is trivial for all $n \geq 0$, so $(\widehat{X},H)$ is algebraically stable and dynamically wild.
\end{proof}

We close this section by proving that, for any choice of functions $p(1,n)$, $p(2,n)$, $o(n)$, the resulting action of $H$ on $\widehat{X}$ is not conjugate to an action on the boundary of a tree with constant spherical index. 

For this proof we will need the following notions. 

Let $S=\{s_1,\ldots,s_n\}$ be a finite set, and let $\Gamma$ be a group  acting  on $S$  by permutations. Let $\mathcal{B}=\{B_1,\ldots,B_m\}$ be a partition of $S$; that is, $S=\bigcup_{i=1}^{m} B_i$ and $B_i\cap B_j=\emptyset$ for $i\neq j$. We say that the action of $\Gamma$ \emph{preserves the partition} $\mathcal{B}$ if every element of $\Gamma$ permutes the sets in $\mathcal{B}$. More precisely, for every  $\gamma\in\Gamma$ and every $i=1,\ldots,m$, there is some $j$ such that $\gamma \cdot B_i=B_j$. We say that the action of $\Gamma$ on $S$ is \emph{primitive} if it is transitive on the elements of $S$ and the only partitions it preserves are the trivial partitions $\{S\}$ and $\{\{s_1\}, \ldots, \{s_n\}\}$. The following is a well-known result which can be found, for instance, in ~\cite[Theorems~9.6,~9.7]{Wieland}.
    
\begin{lem}\label{lem-primitive}
 For $n>2$, the action of the alternating group $\Alt(n)$ on $\{1,\ldots,n\}$ is primitive.
\end{lem}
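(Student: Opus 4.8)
The plan is to prove primitivity directly from the definition given above, first checking transitivity and then showing that $\Alt(n)$ preserves no nontrivial partition of $S = \{1,\ldots,n\}$. For transitivity, given distinct $i,j \in S$, since $n > 2$ I can choose a third symbol $k \notin \{i,j\}$ and use the $3$-cycle $(i\,j\,k)$, which is an even permutation sending $i$ to $j$; hence $\Alt(n)$ acts transitively on $S$. The remaining and main content is the ruling out of nontrivial invariant partitions.

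For this, I would argue by contradiction. Suppose $\mathcal{B}$ is a partition preserved by the action which is neither $\{S\}$ nor the partition into singletons. Since $\mathcal{B} \neq \{S\}$, every block is a proper subset of $S$, and since $\mathcal{B}$ is not the partition into singletons, some block $B$ satisfies $|B| \geq 2$. Fixing such a $B$, I have $2 \leq |B| < n$, so I can select distinct $a,b \in B$ together with some $c \in S \setminus B$. The key observation is the standard block dichotomy: because the action permutes the members of $\mathcal{B}$, for every $\gamma \in \Alt(n)$ the set $\gamma \cdot B$ is again one of the blocks, so $\gamma \cdot B$ is either equal to $B$ or disjoint from $B$. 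Applying this to the $3$-cycle $\gamma = (a\,b\,c) \in \Alt(n)$, I note that $\gamma(a) = b \in B$, so $\gamma \cdot B$ meets $B$ in the point $b$ and is therefore forced to equal $B$; but $\gamma(b) = c$ lies in $\gamma \cdot B = B$, giving $c \in B$ and contradicting $c \notin B$. Hence no such partition exists, and the action is primitive.

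The argument is short, and the only point requiring care is the block dichotomy — that the group permutes the members of an invariant partition and that two distinct blocks are disjoint — which is immediate from the definition of a preserved partition stated just before the lemma; in particular, no appeal to the equal-cardinality-of-blocks fact for transitive actions is actually needed here. I expect the main (minor) obstacle to be purely bookkeeping: ensuring that $a$, $b$, $c$ are genuinely distinct so that $(a\,b\,c)$ is a legitimate $3$-cycle lying in $\Alt(n)$. This is exactly guaranteed by the hypotheses $|B| \geq 2$ and $B \subsetneq S$, together with $n > 2$, which also ensures that $\Alt(n)$ contains $3$-cycles at all.
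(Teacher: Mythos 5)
Your proof is correct. Note, however, that the paper does not actually prove this lemma: it is stated as a well-known fact with a citation to Theorems 9.6 and 9.7 of Wielandt's \emph{Finite Permutation Groups}, where the standard route is via multiple transitivity ($\Alt(n)$ is $(n-2)$-fold transitive, and any doubly transitive action is primitive). Your argument is therefore a genuinely different, self-contained one: you verify the definition directly, using the block dichotomy for an invariant partition together with a single well-chosen $3$-cycle $(a\,b\,c)$, with $a,b$ inside a putative nontrivial block and $c$ outside it. This buys two things: it keeps the result self-contained at the cost of only a paragraph, and it handles $n=3$ uniformly, a case where $\Alt(3)$ is \emph{not} doubly transitive, so the ``2-transitive implies primitive'' shortcut behind the citation does not apply verbatim and one would need a separate remark (e.g.\ that block sizes divide the degree, which is prime). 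The citation approach, in turn, buys brevity and defers to a standard reference. One pedantic point in your write-up: to conclude from $\mathcal{B}\neq\{S\}$ that every block is a proper subset of $S$, you implicitly use the convention that blocks of a partition are non-empty; this is standard and harmless, but worth stating since the paper's definition of a partition does not literally exclude empty blocks.
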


The following statement follows from the definition of a primitive action.

\begin{lem}\label{cor-trivquotient}
Let $S$ and $Q$ be finite sets with an action of a group $H$. Let $f : S \to Q$ be an equivariant map, i.e., for any $ \tau \in H$ and any $s \in S$ we have $\tau \cdot f(s) = f(\tau \cdot s)$. If $f$ is surjective and the action of $H$ on $S$ is primitive, then either $|Q| = |S|$ or $Q$ is a singleton.
\end{lem}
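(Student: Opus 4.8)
The plan is to exploit the primitivity of the $H$-action on $S$ by pulling back the partition of $S$ induced by the fibres of $f$. First I would define, for each $q \in Q$, the fibre $f^{-1}(q) \subseteq S$, and observe that since $f$ is surjective these fibres are all nonempty and form a partition $\mathcal{B} = \{f^{-1}(q) \mid q \in Q\}$ of $S$ indexed by the elements of $Q$. The key point is that this partition is $H$-invariant: because $f$ is equivariant, for any $\tau \in H$ and any $s \in f^{-1}(q)$ we have $f(\tau \cdot s) = \tau \cdot f(s) = \tau \cdot q$, so $\tau$ maps the fibre $f^{-1}(q)$ into the fibre $f^{-1}(\tau \cdot q)$; applying the same argument to $\tau^{-1}$ shows the inclusion is an equality, so $\tau \cdot f^{-1}(q) = f^{-1}(\tau \cdot q)$ and $H$ genuinely permutes the blocks of $\mathcal{B}$.

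Next I would invoke the hypothesis that the action of $H$ on $S$ is primitive. By the definition of primitivity, the only $H$-invariant partitions of $S$ are the two trivial ones: the single-block partition $\{S\}$ and the partition into singletons $\{\{s_1\},\ldots,\{s_n\}\}$. Since $\mathcal{B}$ is an $H$-invariant partition, it must be one of these two. In the first case, $\mathcal{B} = \{S\}$ means there is exactly one fibre, so $f$ is constant and, being surjective, $Q$ has a single element, i.e.\ $Q$ is a singleton. In the second case, $\mathcal{B}$ is the partition into singletons, so each fibre $f^{-1}(q)$ consists of exactly one element; combined with surjectivity this means $f$ is a bijection, whence $|Q| = |S|$. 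This gives exactly the dichotomy in the statement.

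There is essentially no hard part here: the statement is a direct unwinding of the definition of a primitive action, and the only thing to verify with any care is that the fibre partition is $H$-invariant, which follows immediately from equivariance of $f$. One minor point worth stating explicitly is that $\mathcal{B}$ is a genuine partition (blocks nonempty and disjoint with union $S$), which relies on $f$ being surjective so that no index $q$ contributes an empty block. I would write the proof in just a few lines, emphasizing the equivariance computation $f(\tau \cdot s) = \tau \cdot f(s)$ and then quoting primitivity to conclude.
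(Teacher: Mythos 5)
Your proof is correct and is precisely the argument the paper intends: the paper states this lemma without proof, remarking only that it ``follows from the definition of a primitive action,'' and your unwinding (the fibre partition $\{f^{-1}(q)\}_{q\in Q}$ is $H$-invariant by equivariance, hence trivial by primitivity, giving either $Q$ a singleton or $f$ a bijection) is exactly that intended argument. Your care in checking both inclusions for $\tau\cdot f^{-1}(q)=f^{-1}(\tau\cdot q)$ and in noting that surjectivity rules out empty blocks is appropriate and complete.
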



We now show that if an action of the group $H$ on the boundary $\widehat{X}'$ of a tree $\tree'$ is conjugate to the action $(\widehat X, H)$ in Theorem \ref{thm-main1}, then the spherical index $m' = (m_1',m_2',\ldots)$ of $\tree'$ cannot be constant.

To establish notation, let $X'_0$ be a singleton set and, for $n\geq 1$, let $X'_n$ be a sequence of sets with cardinality $|X'_n|=m_n'$. Then the set of vertices of $\tree ' $ at level $n$ is $V'_n=\prod_{i=0}^{n}X_i'$, and the set of infinite paths $\widehat X' \cong \prod_{i \geq 0} X_i'$. 
For the action $(\widehat{X},H)$ defined in Theorem \ref{thm-main1}, suppose there is a homeomorphism $\phi: \widehat X \to \widehat X'$, such that for every $\widehat h \in H$ the composition $\phi \circ \widehat h \circ \phi^{-1}$ is an automorphism of the tree $\tree'$. Then there is an induced injective map $\phi_*: H \to \operatorname{Homeo}(\widehat X')$, and so $H$ acts on $\widehat X '$ by homeomorphisms.

In the proof, we use the property that every homeomorphism $\phi: \widehat X \to \widehat X'$ of the boundaries of trees is induced by a tree map. This is a consequence of the fact that $\widehat X$ (resp.\ $\widehat X'$) is a product of finite sets $X_i$ (resp.\ $X_i'$), $i\geq 0$, and so it can be represented as the inverse limit of finite sets $V_n = \prod_{i=0}^n X_i$ (resp.\ $V_n' = \prod_{i=0}^n X_i'$), $n \geq 0$, as in \eqref{eq-treeinvlim} in Section \ref{sec-trees}.
We explain the argument in more detail next.

For $n \geq 0$, denote by $p_n: \widehat X \to V_n$ and $p_n': \widehat X' \to V_n'$ the projections. Then $p_n' \circ \phi: \widehat X \to V_n'$ is a map from $\widehat X$ to a finite space $V_n'$, and by \cite[Lemma 1.1.16]{RZ} for each $n \geq 0$ there is $i_n \geq 0$ and a map $\phi_n: V_{i_n} \to V_n'$ such that $p_n' \circ \phi = \phi_n \circ p_n$. Since $\phi$ is equivariant with respect to the actions of $H$ on $\widehat X$ and $\widehat X'$, every $\phi_n$, $n \geq 0$, is equivariant with respect to the actions of $H$ on $V_{i_n}$ and $V_n'$. It is then standard to show that ${\displaystyle \phi = \lim_{\longleftarrow}\{\phi_n: V_{i_n} \to V_n'\}}$, that is, $\phi$ is induced by a tree map.

We now prove that the sequence $m'=(m_1',m_2',\ldots)$ cannot be constant or even bounded.

\begin{prop}\label{p:limsupmn}
 We have ${\displaystyle \limsup_{n \to \infty} m_n'=\infty}$.
\end{prop}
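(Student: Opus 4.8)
The plan is to argue by contradiction: assume $\limsup_{n\to\infty} m_n' < \infty$, so that $M := \sup_n m_n'$ is finite (a bounded sequence of integers with finite $\limsup$ is bounded). The heart of the matter is a single structural fact about the factors $A_j = \Alt(o(j))$: each is a \emph{nonabelian simple} group (as $o(j) \ge 5$), so every homomorphism from $A_j$ into a group of order $< |A_j| = o(j)!/2$ is trivial. First I would record that $o(n) \to \infty$: since $p$ takes values in distinct odd primes, $\max\{p(1,n),p(2,n)\}$ is unbounded, and $o(n) \ge \max\{p(1,n),p(2,n)\}$. Hence I may fix an index $j$ with $o(j) > M$, so that $|A_j| = o(j)!/2 > M! \ge m_k'!$ for every $k$ (using $m_k'\ge 2$, whence $M\ge 2$).

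Next I would pass to the closure. Conjugation by the homeomorphism $\phi$ carries the closure of $H$ in $\Homeo(\widehat X)$, which is $W_\infty := \prod_{i \ge 0} A_i$ by Lemma \ref{lem-closure}, onto the closure of $\phi_*(H)$ in $\Homeo(\widehat X')$. Since $\Aut(\tree')$ is closed in $\Homeo(\widehat X')$ and contains $\phi_*(H)$, this closure acts on $\widehat X'$ by root-preserving automorphisms, and in particular $W_\infty$ acts on every vertex set $V_n'$. Using the factorization $p_n' \circ \phi = \phi_n \circ p_{i_n}$ together with Lemma \ref{lem-finiteproduct}, the action on $V_n'$ factors through the projection $W_\infty \to W_{(n)} := \prod_{i=0}^{i_n} A_i$; thus each factor $A_j$ genuinely acts on $V_n'$ through a finite quotient.

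The core step is an induction on the level $k$ showing that $A_j$ acts trivially on $V_k'$ for all $k$. The base case $V_0' = \{\ast\}$ is clear. For the inductive step, if $A_j$ acts trivially on $V_{k-1}'$, then every element of $A_j$ fixes all level-$(k-1)$ vertices and hence preserves each fiber of the map $V_k' \to V_{k-1}'$; since $\tree'$ is spherically homogeneous, each such fiber $B$ has $|B| = m_k' \le M$, and restriction yields a homomorphism $A_j \to \Sym(B) \cong \Sym(m_k')$. As $|A_j| = o(j)!/2 > m_k'! = |\Sym(m_k')|$, simplicity of $A_j$ forces this homomorphism to be trivial, so $A_j$ acts trivially on every fiber and therefore on all of $V_k'$. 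Consequently $A_j$ acts trivially on $\widehat X' = \varprojlim V_k'$. Conjugating back by $\phi$ shows $A_j$ acts trivially on $\widehat X$; but $A_j = \Alt(o(j))$ acts faithfully, hence nontrivially, on the coordinate $X_j$ of $\widehat X = \prod_{i} X_i$, a contradiction. This forces $\limsup_n m_n' = \infty$.

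I expect the main obstacle to be the bookkeeping in the second paragraph --- verifying that the closure action descends to each $V_n'$ through $W_{(n)}$ and that each factor $A_j$ acts there as claimed --- rather than the induction itself, which is elementary once the simplicity-and-order comparison is in place. I note that one could instead run the argument through primitivity of the alternating action (Lemma \ref{lem-primitive}) and Lemma \ref{cor-trivquotient}, analyzing $\phi_n$ coordinate by coordinate; but the simplicity-and-order argument above avoids classifying the block systems of the product action and handles possible coincidences among the values $o(i)$ uniformly.
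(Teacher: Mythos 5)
Your proof is correct, but it takes a genuinely different route from the paper's. The paper argues directly, with \emph{primitivity} as the key input: for each $n$, the factor $Y_n=\{e_0\}\times\cdots\times\{e_{n-1}\}\times A_n\times\{e_{n+1}\}\times\cdots$ of the closure $\prod_{i\geq 0}A_i$ acts on $\tree'$, and every $Y_n$-orbit in every level $V_k'$ is either a singleton or has cardinality exactly $|X_n|$, by Lemma \ref{lem-primitive} together with Lemma \ref{cor-trivquotient} applied to the equivariant surjections $\phi_k\colon V_{i_k}\to V_k'$. Taking the largest level $\ell$ on which all $Y_n$-orbits are singletons (it exists since $Y_n$ acts nontrivially on $\widehat X'$), some fiber of $V_{\ell+1}'\to V_{\ell}'$ contains a nontrivial orbit, whence $m_{\ell+1}'\geq |X_n|$, and letting $n\to\infty$ gives the proposition. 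You argue instead by contradiction, with \emph{simplicity plus an order count} as the key input: under a uniform bound $m_k'\leq M$, a factor $A_j$ with $|A_j|>M!$ admits no nontrivial homomorphism into any $\Sym(m_k')$, so by induction on levels it fixes $\tree'$ vertexwise, contradicting faithfulness of the conjugated action. The surrounding scaffolding is the same in both proofs (the closure acts on $\tree'$ by rooted-tree automorphisms, and the maps $\phi_k$ are equivariant for the closure by density and continuity), and your bookkeeping steps are sound. What each buys: your argument avoids block systems and invariant partitions altogether and applies verbatim to any product of finite simple groups of unbounded order; the paper's argument needs no boundedness hypothesis to get started, requires only primitivity (not simplicity) of the factors, and is quantitative --- it shows that each value $o(n)=|X_n|$ is dominated by some entry of the spherical index $m'$, rather than merely that the sequence $(m_n')$ is unbounded.
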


\begin{proof}
We have $E(H)\cong \prod_{n\geq 0} A_n$, so $\prod_{n\geq 0} A_n$ also acts on $\mathcal{T}'$ by rooted tree automorphisms. As before, $e_n$ denotes the identity permutation in $A_n$. For $n\geq 1$, let
\[
Y_n=\{e_0\}\times\cdots\times\{e_{n-1}\}\times A_n\times\{e_{n+1}\}\times\cdots \subset E(H),
\]
this subgroup is clearly isomorphic to $A_n$. For any $\widehat x \in \widehat X$, the orbit $Y_n \cdot \widehat{x}$ has the same cardinality as $X_n$ and, moreover, the action of $Y_n$ on $Y_n \cdot \widehat x$ is conjugate to the action of $A_n$ on $X_n$. Thus the action of $Y_n$ on the orbit $Y_n \cdot \widehat{x}$ is primitive, and $V_n$ is the union of a finite number of orbits of $Y_n$ of the same cardinality $|X_n|$. Since $Y_n$ acts trivially on $X_k$ for $k \geq n+1$, then for each $k \geq n$ the vertex set $V_k$ is the union of a finite number of orbits of $Y_n$, each of cardinality $|X_n|$. 

For each $k \geq 0$ let $\phi_k: V_{i_k} \to V_k'$ be the mapping defined before the proposition, and consider the partition of $V_k'$ into orbits of $Y_n$. Since the action of $A_n$ on $X_n$ is primitive, by Lemma \ref{cor-trivquotient} each $Y_n$-orbit in $V_k'$ is either a singleton, or a set of cardinality $|X_n|$. 
Let $ \ell  \geq 0$ be the largest index such that the partition of $V_\ell'$ into orbits of $Y_n$ is a partition into singletons. Then the projection $\pr_{\ell}: V_{\ell+1}' \to V_\ell'$ has fibers of cardinality at least $|X_n|$.  Since ${\displaystyle \lim_{n\to \infty} |X_n|=\infty}$, the statement of the proposition follows.
\end{proof}

\begin{rem}\label{rem-assouad}
A Cantor set $X$ with an ultrametric $D$ admits a bi-Lipschitz embedding into a Euclidean space if and only if its Assouad dimension is finite \cite[Sec.~3.2]{Belissard}. For the boundary $\widehat X$ of a spherically homogeneous  tree $\tree$ with ultrametric $D$ defined for instance by \eqref{eq-metricd}, it is easy to show that if $(\widehat{X},D)$ has finite Assouad dimension, then $\tree$ must have bounded spherical index $m = (m_1,m_2,\ldots)$; that is, there must exist $M >1$ such that for all $\ell \geq 1$ we have $m_\ell \leq M$. Proposition~\ref{p:limsupmn} shows that for every ultrametric $D$ on the Cantor set $\widehat{X}$ in Theorem \ref{thm-main1}, which is invariant under the action of $H$, $(\widehat X,D)$ has infinite Assouad dimension.
\end{rem}

\begin{rem}\label{rem-zhang}
The argument in Proposition \ref{p:limsupmn} is valid for any choice of functions $p$ and $o$ satisfying~\eqref{po} in Theorem \ref{thm-main1}. We remark that it is possible to choose $o(n)$ so that it takes values on distinct primes. By the celebrated result by Zhang~\cite{Zhang} there is a constant $C>0$ so that there are infinitely many consecutive primes $p <p '$ satisfying $p '<p +C$. Then we define $p(1,n)$, $p(2,n)$ and $o(n)$ by induction as follows: let $p(1,1)$ be the first prime greater than $C$, and let $p(2,1)$ and $o(1)$ be consecutive primes such that $o(1)<p(2,1)+C<p(2,1)+p(1,1)$. For $n>1$, let $p(1,n)$ be a prime greater than $p(1,n-1)$, $p(2,n-1)$ and $o(n-1)$, and let $p(2,n)$ and $o(n)$ be consecutive primes such that $o(n)<p(2,n)+C<p(2,n)+p(1,n)$.

If $o(n)$ is a prime for $n \geq 1$, then the discussion above Problem \ref{prob-constant} shows that the action in Theorem \ref{thm-main1} is not conjugate to an action by root-preserving automorphisms of a tree with constant spherical index. 
\end{rem}

\section{Wild actions of wreath product groups}

In this section we prove Theorems \ref{thm-main2} and \ref{thm-main3}.

\subsection{Wreath product groups}\label{subsec-wreathproduct} We recall the definition of the wreath product, see \cite{BOERT1996} for details.

Let $G$ and $H$ be finite groups acting by permutations on finite sets $X$ and $Y$ respectively. Denote by $f: X \to H$ a function which assigns a permutation of $Y$ to each $x \in X$, and let
  $H^{|X|} = \{f: X \to H\}$ be the set of all such functions. Note that $G$ acts on $H^{|X|}$ by shifting the argument, that is,
  $$G \times H^{|X|} \to H^{|X|}: (s,f) \mapsto f^s,$$
  where 
  $$f^s(x) = f(s^{-1}\cdot x).$$
  Then the \emph{wreath product} $G \ltimes H^{|X|} $
  is a group with group product $$(s_1,f_1)\circ (s_2,f_2)=(s_1 s_2, f_1 f_2^{s_1}),$$ acting on $X \times Y$ by
    \begin{eqnarray}\label{wp-action} (s,f)\cdot (x,y) = (s\cdot x, f(s\cdot x)\cdot y). \end{eqnarray}
That is, the action \eqref{wp-action} permutes the copies of $Y$ in the product $X\times Y$, while permuting elements within each copy of $Y$ independently by permutations defined by the function $f$.

For convenience of the reader we also spell out the action of the product of two elements on $(x,y) \in X \times Y$, as we will use this formula often in our proofs. Namely, we have
\begin{align*}(s_1,f_1)  \circ (s_2,f_2) \cdot(x,y) & = (s_1s_2,f_1f_2^{s_1}) \cdot (x,y) = (s_1s_2\cdot x,f_1(s_1s_2 \cdot x)f_2(s_2\cdot x) \cdot y),\end{align*}
where $s_1s_2$ and $f_1(s_1s_2\cdot x)f_2(s_2 \cdot x)$ are compositions of permutations of $X$ and $Y$ respectively. In our notation, we reserve $\circ$ for the composition of two elements in the semi-direct product $G \rtimes H^{|X|}$, concatenation for the composition of permutations in the groups $G$ and $H$, and $\cdot$ for the actions of the groups $G$ and $H$ on the sets $X$ and $Y$ respectively.

Now let $\tree$ be a spherically homogeneous tree with spherical index $m = (m_1,m_2,\ldots)$, that is, its set of vertices is $V = \bigsqcup_{n \geq 0} V_n$, where $V_0 = X_0$ is a singleton, $V_n = \prod_{i=0}^n X_i$, and for $i \geq 1$ the set $X_i$ is a set with $m_i$ elements. Then boundary of $\tree$ is the set $\widehat{X} \cong \prod_{i \geq 0} X_i$. For $n \geq 1$, let $\tree(n)$ be a finite subtree of $\tree$ with the vertex set $\bigsqcup_{i = 0}^n V_i$. Then every root-preserving automorphism $a \in \Aut(\tree)$ restricts to a root-preserving automorphism of $\tree(n)$. 

Recall that we denote by $\Sym(m_i)$ the symmetric group on $m_i$ symbols. Then $\Aut(\tree(1)) \cong \Sym(m_1)$. For $\tree(2)$ the automorphism group $\Aut(\tree(2))$ is smaller than $\Sym(m_1 m_2)$, since every automorphism must preserve connected paths in $\tree(2)$ consisting of two edges and so 
  $$\Aut(\tree(2))  \cong \Sym(m_1) \ltimes \Sym(m_2)^{|V_1|} =  \Sym(m_1) \ltimes \Sym(m_2)^{|X_1|}$$
is defined by \eqref{wp-action}. Continuing by induction, one obtains that $\Aut(\tree(n))$ is isomorphic to the finite wreath product
  \begin{align}\label{eq-projAut}\Aut(\tree(n)) = \Aut(\tree(n-1)) \ltimes \Sym(m_n)^{|V_{n-1}|} = \Sym(m_1) \ltimes \Sym(m_2)^{|V_1|} \ltimes \cdots \ltimes \Sym(m_n)^{|V_{n-1}|},\end{align}
  and in the limit $\Aut(\tree)$ is isomorphic to the infinite wreath product of the symmetric groups $\Sym(m_i)$, which we denote $\Sym(m)^\infty$. 
  
For $i \geq 1$, let $A_i \subset \Sym(m_i)$ be a subgroup which acts transitively on the set $X_i$. Then similarly to the above construction one can define the infinite wreath product group
   $$A^\infty: = A_1 \ltimes A_2^{|V_1|} \ltimes A_3^{|V_2|} \ltimes \cdots \subset \Sym(m)^\infty,$$
which also acts on the boundary $\widehat X$ of $\tree$.
  
\begin{rem} \label{rem-countablegen}
Taking the inverse limit of the projections on the first factor in \eqref{eq-projAut}, we can write $\Aut(\tree)$ as the inverse limit of finite groups
  $${\displaystyle \Aut(\tree) = \lim_{\longleftarrow}\{\pr_{n-1}:\Aut(\tree(n)) \to \Aut(\tree(n-1))\}},$$
indexed by natural numbers. Then by \cite[Proposition 4.1.3]{Wilson} $\Aut(\tree)$ is topologically countably generated, that is, it contains a countably generated dense subgroup $G$. With the discrete topology, $G$ is a countably generated infinite discrete group.
\end{rem} 
 
\subsection{Actions of wreath products are algebraically stable}\label{subsec-alternating} We now prove Theorem \ref{thm-main2}. Namely, we show that, for a minimal action of a countably generated group $G$, if the Ellis group $E(G)$ is isomorphic to the wreath product of finite groups, then the action of $G$ (and of $\widehat G$) is dynamically wild and algebraically stable with trivial direct limit centralizer group. 

Let $\tree$ be a spherically homogeneous tree with spherical index $m = (m_1, m_2,\ldots)$, and let $\widehat G = A_1 \ltimes A_2^{|V_1|} \ltimes \cdots$ be the infinite wreath product of the finite groups, acting on the boundary $\widehat X$ of $\tree$, defined in Section \ref{subsec-wreathproduct}. By assumption for $i \geq 1$ the action of $A_i$ on $X_i$ is transitive, then for each $n \geq 1$ the finite wreath product $H_n = A_1 \ltimes A_2^{|V_1|} \ltimes \cdots \ltimes A_n^{|V_{n-1}|}$ acts transitively on $V_n$. By an argument similar to the one in \cite[Proposition~4.1.3]{Wilson}, $\widehat G$ contains a countably generated dense subgroup $G$. For any $g \in G$, the restriction $g|V_n$ is an element of $H_n$, and since $G$ is dense in $\widehat G$, for any $h \in H_n$ there is $g \in G$ such that $g|V_n = h$. Thus $G$ acts transitively on every $V_n$, $n \geq 0$, and it follows that the action of $G$ on $\widehat X$ is minimal.

By \cite[Theorem 1.10]{Lukina2019b} the infinite wreath product $\widehat G$ contains non-Hausdorff elements, and so by \cite[Theorem 1.7]{HL2019b} the action $(\widehat X, G)$ is dynamically wild not of finite type. We now show that the centralizer direct limit group of the action $(\widehat X, G)$ is trivial by an explicit computation.

Choose an infinite path $\widehat{x} = (x_0,x_1,x_2,\ldots) \in \widehat X$, where $x_n \in V_n$ for $n \geq 0$. For $n \geq 0$, define a decreasing sequence of clopen neighborhoods of $\widehat x$ by
\begin{align}\label{eq-nbhdzero}U_n = \{ \widehat w = (w_0, w_1,w_2, \ldots) \in \widehat{X} \mid w_n= x_n\,\}, \end{align}
that is, $U_n$ consists of all paths containing the vertex $x_n$. In particular, $U_0  = \widehat X$. The action of $g \in G$ preserves $U_n$ if and only if $g$ fixes $x_n$. For $n \geq 0$ denote by
    \begin{align}\label{eq-gn}G_n = \{g \in G \mid g \cdot x_n = x_n \}\end{align}
the isotropy group of the action of $G$ at $U_n$. Then the restricted action of $G$ to $U_n$ is that of $G_n$. Denote by $\cD_{\widehat x}$ the discriminant group of the action, defined in Section \ref{subsec-EG}, and for $n \geq 0$ let
  $K_n$ and $Z_n$ be the stabilizer and the centralizer subgroups defined by \eqref{eq-Kchain} and \eqref{eq-Zchain} for $E(G)_{\widehat x} \cong \cD_{\widehat x}$ respectively.  
  Since by assumption the action of $G$ is effective, $K_0$ and $Z_0$ are trivial.

For $i > n$, we say that $w_i \in V_i$ is a \emph{descendant} of $x_n \in V_n$, if $w_i$ is connected to $x_n$ by a finite path of edges without self-intersections joining the vertices in $\bigsqcup_{k=n}^i V_k$ in $\tree$. For $n \geq 0$, let $\tree_n$ be an infinite subtree of $\tree$ with the vertex set $V(\tree_n) = \bigsqcup_{i \geq n} V_i^n$, where $V_i^n$ are defined as follows for $i \geq n$ (see Figure \ref{fig:Tn}):
\begin{figure}
\includegraphics[width=0.60\textwidth]{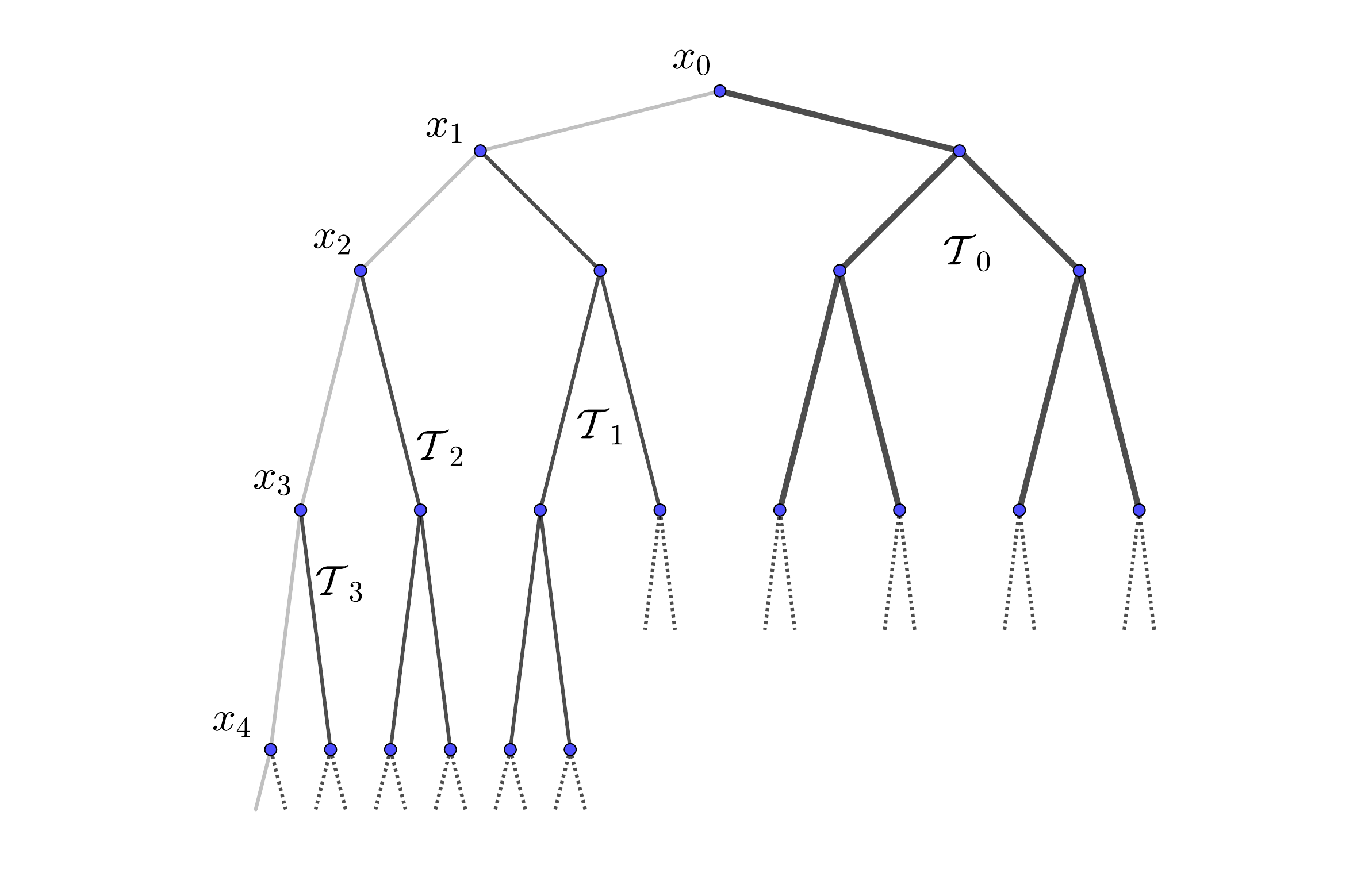}
\caption{Subtrees $\tree_n$, for $n=0,1,2,3$, of a tree $\tree$ with constant spherical index $(2,2,\ldots)$}
\label{fig:Tn}
\end{figure}
\begin{enumerate}
\item The set $V^n_n = \{x_n\}$ is a singleton,
\item For $i > n$, 
  $$V^n_i = \{w_i \in V_i \mid w_i \textrm{ is a descendant of }x_n \textrm { and }w_i \textrm{ is not a descendant of }x_{n+1}\}.$$ 
\end{enumerate}
Then the set $V = V(\tree)$ of vertices of $\tree$ is the disjoint union $\bigsqcup_{n \geq 0} V(\tree_n)$. For $n \geq 0$ denote by 
  $$\widehat X_n = \{(w_i) \in \widehat X \mid w_i = x_i \textrm{ for } 0 \leq i < n, \textrm{ and } w_i \in V^n_i \textrm{ for all }i \geq n\}$$
the set of paths contained in the tree $\tree_n$ on levels $V_i$, $i \geq n$, and note that $\widehat{X}_n = U_n \backslash U_{n+1}$. The tree $\tree_n$ has $x_n$ as a root, while paths in $\widehat X_n$ start at $x_0 \in V_0$, so although $\widehat X_n$ is isomorphic to the boundary of the tree $\tree_n$, $\widehat X_n$ and the boundary of $\tree_n$ are not the same object.

\begin{lem}\label{lem-invtn}
\begin{enumerate}
\item The set $\widehat X_n$ is invariant under the action of $\cD_{\widehat{x}}$, for $n \geq 0$.
\item Let $n \geq 0$ and let $\widehat g \in K_n$. Then $\widehat g|\widehat X_i = \id$ for $i \geq n$.
\end{enumerate}
\end{lem}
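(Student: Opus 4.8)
The plan is to reduce both claims to a single structural observation: every element of $\cD_{\widehat x} = E(G)_{\widehat x}$ acts on $\widehat X$ as a root-preserving automorphism of $\tree$ fixing the whole ray $\widehat x = (x_0,x_1,x_2,\ldots)$, and hence fixing each vertex $x_n$. First I would recall that, under the identification of $E(G)$ with the wreath product $\widehat G \subset \Aut(\tree)$ from Section \ref{subsec-wreathproduct}, the isotropy group $E(G)_{\widehat x}$ consists exactly of those $\widehat g$ with $\widehat g \cdot \widehat x = \widehat x$; since a tree automorphism sends a path $(v_0,v_1,\ldots)$ to $(\widehat g(v_0),\widehat g(v_1),\ldots)$, this is equivalent to $\widehat g(x_n) = x_n$ for all $n \geq 0$. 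I would also record the two elementary facts that $U_n = \{\widehat w \mid w_n = x_n\}$ is precisely the cylinder set $\tree_{x_n}$ of infinite paths through the vertex $x_n$, and that a tree automorphism carries $\tree_v$ onto $\tree_{\widehat g(v)}$.

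For part (1), I would take $\widehat g \in \cD_{\widehat x}$ and argue that, since $\widehat g$ fixes both $x_n$ and $x_{n+1}$, it maps $U_n = \tree_{x_n}$ onto $\tree_{\widehat g(x_n)} = U_n$ and likewise maps $U_{n+1}$ onto itself. Using the identity $\widehat X_n = U_n \setminus U_{n+1}$ recorded before the lemma, this immediately yields $\widehat g(\widehat X_n) = \widehat X_n$, giving invariance under all of $\cD_{\widehat x}$.

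For part (2), the additional ingredient I would establish is that the neighborhoods form a decreasing chain $\widehat X = U_0 \supset U_1 \supset \cdots$: a path through the vertex $x_i$ with $i \geq n$ must contain the unique level-$n$ ancestor of $x_i$, which is $x_n$ because the ray $\widehat x$ passes through both, so $U_i \subset U_n$. It then follows that $\widehat X_i = U_i \setminus U_{i+1} \subset U_i \subset U_n$ for every $i \geq n$. Since, by the description following \eqref{eq-Kchain}, an element $\widehat g \in K_n$ is exactly one acting as the identity on $U_n$, it acts as the identity on the subset $\widehat X_i$, which is the desired conclusion.

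I do not expect either part to present a genuine obstacle; the only point requiring care is the bookkeeping that identifies $E(G)_{\widehat x}$ with the group of tree automorphisms fixing the ray $\widehat x$ and recognizes $U_n$ as the cylinder $\tree_{x_n}$. Once that is in place, both statements drop out of the decreasing chain $U_n \supset U_{n+1} \supset \cdots$ together with the defining property of $K_n$.
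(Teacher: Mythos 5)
Your proposal is correct and follows essentially the same argument as the paper: elements of $\cD_{\widehat x}$ fix every vertex $x_n$ along the ray, hence preserve each $U_n$ and $U_{n+1}$ and therefore $\widehat X_n = U_n \setminus U_{n+1}$, while part (2) follows from the inclusion $\widehat X_i \subset U_n$ for $i \geq n$ together with the defining property of $K_n$. The extra bookkeeping you supply (identifying $U_n$ with the cylinder through $x_n$ and verifying the chain $U_0 \supset U_1 \supset \cdots$) is implicit in the paper's proof but makes the same reasoning fully explicit.
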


\begin{proof}
Since the action of every $\widehat g \in \cD_{\widehat{x}}$ fixes $x_n$ and $x_{n+1}$ for $n \geq 0$, we have that the action of $\widehat g$ preserves both $U_n$ and $U_{n+1}$. Therefore, the action of $\widehat g$ preserves $\widehat X_n$ and we proved item $(1)$. Then item $(2)$ follows from the definition of $K_n$ and the fact that for $i \geq n$ we have $\widehat X_i \subset U_n$.
\end{proof}

Thus by Lemma \ref{lem-invtn} for every $n \geq 0$ the restriction of the action of $\widehat g \in \cD_{\widehat x}$ to $\tree_n$ defines a homomorphism 
  $$\cD_{\widehat x} \to \Aut (\tree_n): \widehat{g} \mapsto \widehat{g}|\tree_n.$$ 
Denote by $S_n$ the image of this homomorphism in $\Aut(\tree_n)$. Then by a direct computation one obtains that
\begin{equation}\label{eq:infsp}
S_n \cong (A_{n+1})_{x_{n+1}} \ltimes A_{n+2}^{|V^n_{n+1}|} \ltimes A_{n+3}^{|V^n_{n+2}|} \ltimes \dots,    
\end{equation}
where $(A_{n+1})_{x_{n+1}} = \{\tau \in A_{n+1} \mid \tau \cdot x_{n+1} = x_{n+1}\}$. That is, the restriction of $S_n$ to $V_{n+1} \cap \tree_n$ consists of permutations in $A_{n+1}$ which fix $x_{n+1}$. For $i \geq n+1$ the values of the functions $f: V_i \to A_{i+1}$ at different $v \in V_i$ are independent, which by an argument similar to the one in Section \ref{subsec-wreathproduct} yields $\eqref{eq:infsp}$.

\begin{lem}\label{lem-lemm34}
\begin{enumerate}
\item For $n \geq 1$ we have $K_n \cong \prod_{i=0}^{n-1} S_{i}$. 
\item For any $n \geq 1$ we have $|K_n| = \infty$.
\item $\cD_{\widehat x} \cong \prod_{i \geq 0} S_{i}$.
\end{enumerate}
\end{lem}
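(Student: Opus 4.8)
The plan is to prove part (3) first, as an explicit isomorphism, and then read off (1) and (2) from it. The organizing principle is that the vertex set decomposes as $V(\tree)=\bigsqcup_{i\geq 0}V(\tree_i)$, and by Lemma~\ref{lem-invtn}(1) every $\widehat g\in\cD_{\widehat x}$ fixes $x_i$ and preserves $\widehat X_i$, so it restricts to a root-preserving automorphism of the rooted subtree $\tree_i$, which by definition lies in $S_i$. Collecting these restrictions gives a group homomorphism
\[
\Theta\colon \cD_{\widehat x}\longrightarrow \prod_{i\geq 0}S_i,\qquad \widehat g\longmapsto (\widehat g\,|\,\tree_i)_{i\geq 0}.
\]
This $\Theta$ is injective: since the $V(\tree_i)$ exhaust every vertex of $\tree$, an element whose restriction to each $\tree_i$ is trivial fixes all vertices, hence is the identity automorphism.

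The main step will be surjectivity of $\Theta$, and here I would use the portrait (local permutation) description of $\Aut(\tree)$: an automorphism is freely and completely determined by a choice, at each vertex $v$, of a permutation $g_v$ of the children of $v$. Membership in $\widehat G=A^\infty$ amounts to requiring $g_v\in A_{|v|+1}$ at every vertex, and membership in $\cD_{\widehat x}$ amounts additionally to requiring, at each path vertex $x_n$, that $g_{x_n}$ fix the child $x_{n+1}$. Given an arbitrary tuple $(s_i)_i\in\prod_i S_i$, I would define $\widehat g$ by assigning to each $v\in V(\tree_i)$ the local permutation prescribed by $s_i$ at $v$. Because the subtrees partition the vertex set and all the defining constraints are vertex-local, this is a legitimate portrait; the root constraint at $x_n$ is exactly the stabilizer condition $(A_{n+1})_{x_{n+1}}$ already built into $S_n$ in \eqref{eq:infsp}, so $\widehat g\in\cD_{\widehat x}$, and by construction $\widehat g\,|\,\tree_i=s_i$ for every $i$. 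Hence $\Theta$ is an isomorphism, which is (3).

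Parts (1) and (2) then follow quickly. Since $U_n=\{\widehat x\}\sqcup\bigsqcup_{i\geq n}\widehat X_i$ and $\widehat X_i$ is canonically identified with the boundary of $\tree_i$, the element $\widehat g$ acts on $\widehat X_i$ precisely as $s_i=\widehat g\,|\,\tree_i$ acts on $\partial\tree_i$. As $\tree_i$ has no leaves, a root-preserving automorphism acting trivially on $\partial\tree_i$ is trivial, so $\widehat g\,|\,U_n=\id$ if and only if $s_i=\id$ for all $i\geq n$. Thus $\Theta$ carries $K_n$ onto $\prod_{i=0}^{n-1}S_i\times\prod_{i\geq n}\{\id\}$, giving $K_n\cong\prod_{i=0}^{n-1}S_i$ and hence (1). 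For (2), each $S_i$ is an infinite iterated wreath product: for every $j\geq i+2$ the factor $A_j^{|V^i_{j-1}|}$ is nontrivial, because $A_j$ is nontrivial (it acts transitively on $X_j$ with $m_j\geq 2$) and $|V^i_{j-1}|\geq m_{i+1}-1\geq 1$. In particular $S_0$ is infinite, so $\prod_{i=0}^{n-1}S_i$ is infinite for $n\geq 1$ and $|K_n|=\infty$.

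I expect the surjectivity of $\Theta$ to be the one genuinely substantive point: one must justify that the restrictions to the distinct subtrees $\tree_i$ may be prescribed entirely independently of one another. The plan handles this through the portrait description, the observation that the subtrees partition the vertices, and the fact that the sole extra constraint cutting $\cD_{\widehat x}$ out of $\widehat G$ is local to the root $x_n$ of $\tree_n$ and is already absorbed into the factor $(A_{n+1})_{x_{n+1}}$ of $S_n$; everything else reduces to the routine bookkeeping already carried out in deriving \eqref{eq:infsp}.
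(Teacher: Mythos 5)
Your proof is correct and follows essentially the same route as the paper: both rest on the restriction homomorphism to the product of the groups $S_i$, with surjectivity justified by the fact that the wreath-product (portrait) coordinates at distinct vertices, and hence on the disjoint subtrees $\tree_i$, can be prescribed independently. The only difference is organizational: you prove (3) first and read off (1)--(2), while the paper proves (1) directly via the maps $\cD_{\widehat x}\to\prod_{i=0}^{n}S_i$ and dismisses (3) as straightforward; the substance is the same.
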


\begin{proof}
Let $\X_n = \bigcup_{i=0}^n \widehat X_i$. Since the trees $\tree_i$ for $0 \leq i \leq n$ are disjoint and invariant under the action of $\cD_{\widehat x}$, the product $\prod_{i=0}^{n} S_{i}$ acts on $\X_n$. For $i \geq 0$ the values of the functions $f: V_i \to A_{i+1}$ in the wreath product at different $v \in V_i$ are independent, and they take all values in $A_{i+1}$, therefore for $n \geq 0$ the map 
  \begin{equation}\label{eq-Dxproduct} \cD_{\widehat x} \to \prod_{i=0}^{n} S_{i} \colon \widehat{g} \mapsto (\widehat{g}|\tree_0, \widehat{g}|\tree_1, \ldots, \widehat{g}|\tree_n) 
  \end{equation}
is surjective. Note that if the actions of $\widehat h, \widehat g \in \cD_{\widehat x}$ differ on some $V_i$ for $0 \leq i \leq n$, then their images under \eqref{eq-Dxproduct} are distinct. By item $(2)$ in Lemma \ref{lem-invtn} for each $\widehat g \in K_n$ we have $\widehat g|\widehat X_i = \id$ for $i \geq n$, so it follows that the restriction of the map \eqref{eq-Dxproduct} to $K_n$ is injective and $(1)$ holds. 

The group $K_0$ is trivial since the action of $G$ on $\widehat X$ is effective. By \eqref{eq:infsp} the groups $S_n$ are infinite, then $(1)$ implies that $K_n$ are infinite for all $n \geq 1$, and $(2)$ is proved.  Statement $(3)$ is straightforward.
\end{proof}

Statement $(2)$ of Lemma \ref{lem-lemm34} shows that the action of a dense subgroup of a wreath product group is wild not of finite type, providing an alternative proof of \cite[Theorem 7.6(1)]{HL2019b} for wreath products.

Now we can show that the centralizer groups $Z_n$ are trivial for $n \geq 0$.

\begin{lem}
$Z_{n}$ is trivial for any $n \geq 0$.
\end{lem}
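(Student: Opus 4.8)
The plan is to show that any non-trivial element of $K_n$ fails to centralize the subgroup $\widehat U_n$, exactly as was done in the product case in Proposition~\ref{prop-finite}, but now working with the wreath product structure encoded in Lemma~\ref{lem-lemm34}. By part~$(1)$ of that lemma, a non-trivial $\widehat h \in K_n$ corresponds to a tuple $(\widehat h|\tree_0, \ldots, \widehat h|\tree_{n-1}) \in \prod_{i=0}^{n-1} S_i$ which is non-trivial in at least one coordinate, say the $i$-th, where $0 \le i \le n-1$. First I would fix attention on that coordinate and use the explicit description \eqref{eq:infsp} of $S_i$ together with the fact that each $S_i$ acts on the subtree $\tree_i$; the non-triviality of $\widehat h|\tree_i$ means that $\widehat h$ acts non-trivially on some vertex set $V^i_k$ for some $k \ge i+1$, via a non-identity permutation lying either in $(A_{i+1})_{x_{i+1}}$ or in one of the factors $A_{j}$ attached at a descendant vertex.

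The key step is then to produce a witness $\widehat\tau \in \widehat U_n$ that does not commute with $\widehat h$, thereby showing $Ad(\widehat h)$ is non-trivial on $\widehat U_n$ and hence $\widehat h \notin Z_n$. The construction should mirror Lemma~\ref{lem-noncommute}: at the vertex level where $\widehat h$ acts non-trivially, the relevant structure group is some $A_j$ (with $j \ge i+1 \ge 1$, so $A_j$ is transitive on a set $X_j$ of size $m_j \ge 2$, and in the intended applications $A_j$ is large enough to contain a suitable $3$-cycle). I would choose a point $y$ moved by the local action of $\widehat h$ and build $\widehat\tau$ to be supported at the single subtree $\tree_i$, acting there by a permutation $\tau$ that fixes the relevant point of the path $\widehat x$ (so that $\widehat\tau$ still fixes every $x_m$ and therefore lies in $\cD_{\widehat x} \subset \widehat U_n$) but does not commute with the local action of $\widehat h$. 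Because the factors $S_0, \ldots, S_{n-1}$ occur independently in the product decomposition of $\cD_{\widehat x}$ from Lemma~\ref{lem-lemm34}$(3)$, placing $\widehat\tau$ in the single factor $S_i$ guarantees $\widehat\tau \widehat h \ne \widehat h \widehat\tau$ as soon as the $i$-th coordinates fail to commute.

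The main obstacle I anticipate is verifying that a non-commuting $\widehat\tau$ can always be found \emph{inside} $S_i$ rather than in the full automorphism group of $\tree_i$, since $S_i$ is only the image of $\cD_{\widehat x}$ and its factors are constrained alternating/transitive groups $A_j$ rather than full symmetric groups. Concretely one must check that whenever $\widehat h|\tree_i$ acts non-trivially at some vertex, the stabilizer of the path $\widehat x$ within the appropriate factor $A_j$ contains an element not commuting with that local action; this is precisely the content of the isotropy computation in Lemma~\ref{lem-noncommute}, which applies because each $A_j$ acts transitively on at least $\min_j m_j \ge 2$ points and, in the cases of interest, contains enough $3$-cycles. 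I would handle the two possibilities for where $\widehat h$ first acts non-trivially separately: either at the ``root'' permutation in $(A_{i+1})_{x_{i+1}}$, or deeper in one of the wreath factors $A_{j}^{|V^i_{j-1}|}$; in both cases the independence of the function-coordinates of the wreath product lets me localize $\widehat\tau$ to a single copy of $A_j$ and invoke the non-commuting element supplied by the isotropy argument. Once a witness $\widehat\tau \in \widehat U_n$ with $\widehat\tau \widehat h \ne \widehat h \widehat\tau$ is exhibited, it follows that no non-trivial element of $K_n$ lies in $Z_n$, and since $Z_n \subset K_n$, the group $Z_n$ is trivial.
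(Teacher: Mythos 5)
Your reduction is fine as far as it goes: since $Z_n \subset K_n$, it suffices to show that no non-trivial $\widehat h \in K_n$ centralizes $\widehat U_n$, and the decomposition $\cD_{\widehat x} \cong \prod_{i\geq 0} S_i$ of Lemma~\ref{lem-lemm34} does let you localize the problem to a single subtree $\tree_i$. The genuine gap is in your witness construction. You propose to mirror Lemma~\ref{lem-noncommute}: find, \emph{at the same vertex level where $\widehat h$ acts non-trivially}, an element $\tau$ of the local group $A_j$ that fixes the relevant point and fails to commute with the local action of $\widehat h$. You flag this as "the main obstacle" and then dismiss it by asserting that Lemma~\ref{lem-noncommute} applies because each $A_j$ is transitive on at least two points and "in the cases of interest, contains enough $3$-cycles." That assertion is false in the generality of Theorem~\ref{thm-main2}, which assumes only that each $A_i \subset \Sym(m_i)$ is a finite transitive group. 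Lemma~\ref{lem-noncommute} is a statement about alternating groups $\Alt(o(n))$ with $o(n)\geq 5$, and its proof manufactures a $3$-cycle. Take instead $A_j = C(d)$ acting regularly, as in Example~\ref{ex-arith}, which the paper explicitly places inside the family of Theorem~\ref{thm-main2}: then every point stabilizer $(A_j)_{x_j}$ is trivial and every $A_j$ is abelian, so there is \emph{no} same-level witness anywhere --- wreath-product elements supported at distinct vertices of one level commute, and elements supported at the same vertex lie in an abelian group. Your construction produces nothing in this case, even though $K_n$ is infinite and $Z_n$ must still be shown trivial.

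The missing idea is to use the wreath structure itself rather than the internal structure of the $A_j$'s: the correct witness against $\widehat g \in Z_n$ is an element of $\widehat U_n$ whose \emph{function part one level down} is supported at a single vertex $w$, with an arbitrary value $b \neq e$ in the next group $A_{j+1}$. If $\widehat g$ moves $w$, then conjugation by $\widehat g$ moves the support of that function, so $Ad(\widehat g)$ alters the witness; this forces the vertex permutation of $\widehat g$ at every level to be trivial. This argument needs only that $A_{j+1}$ is non-trivial (automatic from transitivity on $m_{j+1} \geq 2$ points), and it is exactly what the paper does via the computations \eqref{eq-trivh}--\eqref{eq-f55}. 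Your localization to $S_i$ is compatible with this trick --- by \eqref{eq:infsp} the group $S_i$ contains full copies of $A_{j}$ at levels below $i+1$, so a support-moving witness can be placed inside a single factor $S_i$ of $\cD_{\widehat x}$ --- but as written, with the witness supplied by the isotropy argument of Lemma~\ref{lem-noncommute}, the proof breaks precisely at the step you identified.
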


\begin{proof}
First note that since $Z_0 \subset K_0$ and $K_0$ is trivial, then $Z_0$ is trivial.

Recall that for any $i \geq 1$ we have $H_i = A_1 \ltimes A_2^{|V_1|} \ltimes \cdots \ltimes A_i^{|V_{i-1}|}$, and for any $\widehat h \in \widehat G$ we can write for its restriction to $V_{i+1}$
   $$\widehat h|V_{i+1} = (h_1,h_2) \in H_i \ltimes A_{i+1}^{|V_{i}|} \cong H_{i+1},$$
where $h_{1} \in H_i$ and $h_{2} : V_i \to A_{i+1}$ is a function.

In particular, for $\widehat g \in Z_n$ we have $\widehat g|V_{i+1} = (g_1,g_2)$ for $i \geq 1$. Let $\widehat h \in \widehat G$ then we have $\widehat h|V_{i+1} = (h_1,h_2)$. Suppose $i \geq n$ and $h_1\cdot x_i = x_i$. Then $h_1\cdot x_n = x_n$, and so $\widehat h \in \widehat U_n$, where $\widehat U_n$ is such that $\widehat U_n/\cD_{\widehat x} = U_n$ for $U_n$ defined in \eqref{eq-nbhdzero}, and such that $G_n = G \cap \widehat U_n$. Note that for any $\widehat h' \in \widehat G$, if $\widehat h'|V_i = h_1$, then $\widehat h' \in \widehat U_n$ since $h_1$ fixes $x_i$. In other words, for any function $h_2' : V_i \to A_{i+1}$, any extension of $(h_1,h_2')$ from $V_{i+1}$ to $\tree$ is in $\widehat U_n$.

We are going to show that $g_1$ is the trivial permutation of $V_i$ by carefully choosing various $h_2$ and using the fact that $\widehat g$ and $\widehat h$ commute. In the arguments below, $g_1,g_2$ and $h_1$ are fixed. For a fixed $h_1$, we choose different functions $h_2: V_i \to A_{i+1}$. Our goal is to show that for some choice of $v \in V_i$ and some choice of $h_2$ depending on $v$ we have $g_1\cdot v = v$. Since $g_1$ is fixed, and $g_1\cdot v = v$ holds for some choice of $h_2$, this statement must hold for any choice of $h_2$. Since the argument is for arbitrary $v \in V_i$, this will show that $g_1\cdot v = v$ for any $v \in V_i$, which is what we want to prove. We assume that $i \geq n$ and $h_1\cdot x_i = x_i$, which means that the element $(h_1,h_2)$ is the restriction of an element of $\widehat U_i \subset \widehat U_n$. By assumption $(g_1,g_2) \in Z_n$, so the elements $(h_1,h_2)$ and $(g_1,g_2)$ of the group $H_i \rtimes A_{i+1}^{|V_i|}$ commute by the definition of $Z_n$.
 
So first take $h_2 = \widetilde{e}: V_i \to A_{i+1}$ be the trivial function, that is, for any $v \in V_i$ we have $\widetilde{e}(v) = e_{i+1}$, where $e_{i+1}$ is the identity in $A_{i+1}$. Since $\widehat g \in Z_n$, and $i \geq n$, $\widehat{g}$ commutes with $\widehat h$, and we have
  $$(g_1,g_2)\circ(h_1,\widetilde e) = \widehat{g} \,\widehat{h}|V_{i+1}=\widehat{h} \, \widehat{g}|V_{i+1} = (h_1,\widetilde e)\circ(g_1,g_2).$$
Applying to this formula the definition of the wreath product, we obtain that for any $(v,k) \in V_i \times X_{i+1}$
 \begin{align*}
  (h_1,\widetilde e) \circ (g_{1},g_{2}) \cdot (v,k)& = (h_1 g_{1}\cdot v, g_{2} (g_1\cdot v) \cdot k), \\   (g_{1},g_{2})\circ (h_1,\widetilde e) \cdot (v,k)& = (g_{1}h_{1}\cdot v, g_{2}( g_1h_1\cdot v) \cdot k ), 
\end{align*}
and so we have $g_1h_1 = h_1g_1$ and for each $v \in V_i$
  \begin{align}\label{eq-trivh}g_{2} ( g_1h_{1} \cdot v) = g_{2} ( g_1 \cdot v).\end{align}
Now assume that $h_2$ is arbitrary. By definition of $Z_n$ we have that $\widehat g$ commutes with $\widehat h$, so
$$(g_1,g_2)\circ (h_1, h_2) = \widehat{g} \,\widehat{h}|V_{i+1}=\widehat{h} \, \widehat{g}|V_{i+1} = (h_1,h_2)\circ (g_1,g_2).$$
 Using the definition of the wreath product, we can write for $(v,k) \in V_i \times X_{i+1}$
\begin{align}\label{eq-f2}
    (g_1,g_2) \circ (h_1,h_2) \cdot (v,k)& = (g_1 h_1\cdot v, g_2 ( g_1 h_1 \cdot v)  h_2 ( h_1 \cdot v) \cdot k), \end{align}
where  $g_2 (g_1 h_1 \cdot v)$ and $h_2( h_1 \cdot v)$ are permutations in $A_{i+1}$, and the concatenation denotes their product. Similarly,
\begin{align} \label{eq-f3}
    (h_1,h_2) \circ (g_1,g_2) \cdot (v,k)& = (h_1 g_1 \cdot v , h_2 ( h_1 g_1\cdot v) g_{2} ( g_1 \cdot v) \cdot k).
\end{align}
Using \eqref{eq-trivh} we obtain from \eqref{eq-f2}
  $$g_2 (g_1 h_1 \cdot v) h_2 ( h_1 \cdot v)  = g_2 ( g_1 \cdot v)h_2 ( h_1\cdot v),$$
 and combining that with \eqref{eq-f3} we must have for each $v \in V_i$
  $$g_2 ( g_1 \cdot v)h_2 (h_1 \cdot v) =  h_2 ( h_1 g_1\cdot v) g_{2} ( g_1 \cdot v).$$
Therefore, for each $v \in V_i$ we have
  \begin{align}\label{eq-f55}  h_2 ( h_1 g_1 \cdot v)  = g_2 ( g_1 \cdot v) h_2 ( h_1\cdot v) g_{2} ( g_1 \cdot v)^{-1}. \end{align}
We will show that for a fixed $v \in V_i$ we have $g_1\cdot v =v$. Since $v \in V_i$ is arbitrary and $g_1$ is fixed, this will prove that $g_1$ is the trivial permutation of $V_i$.

Fix $v \in V_i$. Let $b \ne e_{i+1} \in A_{i+1}$ and define $h_{2}: V_i \to A_{i+1}$ by $h_{2} ( h_1\cdot v) = b$, and $h_{2} ( h_1\cdot u) = e_{i+1}$ for all $u \ne v \in V_i$. This is possible even under the assumption that $h_1 \cdot x_i = x_i$. If $h_1 g_1 \cdot v \ne h_1\cdot v$, then by \eqref{eq-f55}
\[
g_2 ( g_1\cdot v) \, b \,g_2 ( g_1\cdot v)^{-1} =g_2 ( g_1\cdot v) h_2 ( h_1\cdot v)g_2 (g_1\cdot v)^{-1}  = h_2 ( h_1g_1 \cdot v) =e_{i+1}
\]
which implies that $b = e_{i+1}$, and which contradicts the choice of $b$. Therefore, we must have $h_1 g_1\cdot v = h_1\cdot v $. Since $h_1$ is bijective, this implies that $g_1\cdot v = v$. Repeating this argument for any $v \in V_i$ we obtain that $g_1$ must be the trivial permutation of $V_i$.

Note that if $g_1$ is the identity permutation of $V_i$, then the restriction $\widehat{g}|V_k = g_1|V_k$ is the identity permutation for all $0 \leq k \leq i$. If $\widehat g \in Z_n$ is non-trivial, then for some $i \geq n$ we must have that $\widehat{g}|V_i = g_1$ is non-trivial, which contradicts the argument in the lemma. It follows that $Z_n$ is the trivial group. 
\end{proof}

 This completes the proof of Theorem \ref{thm-main2}.

\section{Dynamically and algebraically wild actions}

In this section we prove Theorem \ref{thm-main3} by constructing a family of actions which are dynamically and algebraically wild. 
Let $H$ and $G$ be countable groups acting minimally by root-preserving automorphisms on the trees $\tree_H$ and $\tree_G$, with respective boundaries $\widehat X = \prod_{n\geq 0} X_n$ and $ \widehat Y = \prod_{n\geq 0}Y_n$. 
Denote by $\widehat H \subset \Aut(\tree_H)$ and $\widehat G \subset \Aut(\tree_G)$ the corresponding Ellis groups.
Fix a sequence $\widehat x = (x_n)_{n\geq 0} \in \widehat X$ and, for $n \geq 0$, let 
  $$U_n = \{ \widehat w = (w_n)_{n \geq 0} \mid w_i = x_i \textrm{ for }0 \leq i \leq n\}$$
be a clopen neighborhood of $\widehat x$ in $\widehat X$. Denote by $K_n^H$ and $Z_n^H$ the corresponding stabilizer and centralizer subgroups of the discriminant group $\cD_{\widehat{x}} = \widehat H_{\widehat x}$.

Similarly, fix a sequence $(y_n)_{n\in\N} \in \widehat Y$, and for $n \geq 0$ let 
  $$V_n = \{ \widehat z =(z_n)_{n \geq 0} \mid z_i = y_i \textrm{ for }0 \leq i \leq n\}$$
be a clopen neighborhood of $\widehat y$ in $\widehat Y$. Denote by $K_n^G$ and $Z_n^G$ the corresponding stabilizer and centralizer subgroups of the discriminant group $ \widehat G_{\widehat x}$.

Next, consider the product space $\widehat X \times \widehat Y = \prod_{n \geq 0} X_n\times Y_n$, and the product action of the group $H\times G$ on this space defined by
\[
(H \times G) \times (\widehat X \times \widehat Y ) \to (\widehat X \times \widehat Y ): ((\widehat h, \widehat g),(\widehat w,\widehat z))\mapsto (\widehat h\cdot \widehat w,\widehat g \cdot \widehat z).
\]
Denote by $E(H \times G)$ the closure of the action in $\operatorname{Homeo}(\widehat X \times \widehat Y)$, then $E(H \times G) = \widehat H \times \widehat G$.
Denote by $K_n$ and $Z_n$ the sequence of stabilizer and centralizer subgroups for the product action with respect to the sequence of clopen neighborhoods $U_n\times V_n$, for $n \geq 0$.

Now suppose that the action of $H$ on $\widehat X$ is wild of flat type, that is, the sequence $\{K^H_n\}_{n \geq 0}$ is unbounded and $K_n^H=Z_n^H$ for all $n \geq 0$. Such actions are described, for instance, in Example \ref{thm-firstexamples}, item 2. Suppose that the action of $G$ on $\widehat Y$ is dynamically wild with trivial centralizer group; more precisely, the sequence $\{K_n^G\}_{n \geq 0}$ is unbounded and $Z_n^G$ is trivial for all $n \geq 0$. Such actions are described in Theorems \ref{thm-main1} and \ref{thm-main2}. 
It is straightforward that for the product action of $H \times G$ on $\widehat X \times \widehat Y$ we have for $n \geq 0$
\begin{equation}\label{eq-KnZn}
K_n=K_n^H\times K_n^G,\qquad Z_n=Z_n^H\times Z_n^G = Z_n^H \times \{e\},
\end{equation}
where $e$ is the identity in $\widehat G$. 
Since the action of $H$ on $\widehat X$ is wild of flat type, the group chains $\{K_n^H\}_{n \geq 0}$ and $\{Z_n^H\}_{n \geq 0}$ are both unbounded. It follows that the group chains $\{K_n\}_{n \geq 0}$ and $\{Z_n\}_{n\geq 0}$ are unbounded as well, and so the product action is wild and algebraically wild. If, in addition, $(X,H,\Phi)$ is algebraically wild of finite type, then $Z_n \cong Z_n^H$ is a finite group for $n \geq 0$, and the product action is algebraically wild of algebraic finite type. 

Since $K_n^G$ is non-trivial for any $n \geq 0$, from \eqref{eq-KnZn} it follows that the inclusion of direct limit groups $\U_c^x \subset \U_s^x$ is proper, and the action of $H \times G$ on $\widehat X \times \widehat Y$ is dynamically wild.

\end{document}